\documentclass[12pt,a4paper]{article}
\usepackage[OT1]{fontenc}
\usepackage[latin1]{inputenc}
\usepackage[english]{babel}
\usepackage{amsfonts}
\usepackage{amsthm}
\topmargin0cm
\textheight20.5cm
\footnotesep.3cm
\evensidemargin0cm
\oddsidemargin0cm
\textwidth16cm

\newtheorem{theo}{Theorem}[section]
\newtheorem{prop}[theo]{Proposition}
\newtheorem{lemm}[theo]{Lemma}
\newtheorem{coro}[theo]{Corollary}
\newtheorem{defi}[theo]{Definition}
\newtheorem{rema}[theo]{Remark}

\begin{document}

\title{\vspace*{0cm}Nonpositively curved metric in the positive cone of a finite von Neumann algebra\footnote{2000 MSC. Primary 53C22, 58B20;  Secondary  46L45.}}

\date{}
\author{Esteban Andruchow and Gabriel Larotonda\footnote{Partially supported by IAM-CONICET.}}

\maketitle

\abstract{In this paper we study the metric geometry of the space $\Sigma$ of positive invertible elements of a von Neumann algebra ${\cal A}$ with a finite, normal and faithful tracial state $\tau$. The trace induces an incomplete Riemannian  metric $<x,y>_a=\tau (ya^{-1}xa^{-1})$, and though the techniques involved are quite different, the situation here resembles in many relevant aspects that of the $n\times n$ matrices when they are regarded as a symmetric space. For instance we prove that geodesics are the shortest paths for the metric induced, and that the geodesic distance is a convex function; we give an intrinsic (algebraic) characterization of the geodesically convex submanifolds $M$ of $\Sigma$, and under suitable hypothesis we prove a factorization theorem for elements in the algebra that resembles the Iwasawa decomposition for matrices. This factorization is obtained \textit{via} a nonlinear orthogonal projection $\Pi_M:\Sigma\to M$, a map which turns out to be contractive for the geodesic distance.
\footnotesize{\noindent }\footnote{{\bf Keywords and
phrases:} weak Riemannian metric, minimizing geodesic, nonpositive curvature, convexity, normal projection, factorization}}

\setlength{\parindent}{0cm} 

\section{Introduction}

Let ${\cal A}$ be a von Neumann algebra with a finite (normal, faithful) trace $\tau$. Denote by ${\cal A}_h$ the set of selfadjoint elements of ${\cal A}$,  by $G_{\cal A}$ the group of invertible elements, and by $\Sigma$ the set
\[
\Sigma=e^{{\cal A}_h}=\{ a\in G_{\cal A}: a\ge 0\};
\]
$\Sigma$ is an open subset of ${\cal A}_h$ in the norm topology. Therefore if one regards it as a manifold, its tangent spaces identify with ${\cal A}_h$. We endow these tangent spaces with the (incomplete) Hilbert-Riemann metric
\begin{equation}\label{metrica}
<x,y>_a=\tau(xa^{-1}ya^{-1}) , \ \ a\in \Sigma , \ x,y\in {\cal A}_h.
\end{equation}

Note that $\|x\|_a^2=<x,x>_a:=\tau(xa^{-1} xa^{-1})$, and also that this metric is invariant for the action $I_g: x\mapsto gxg^*$, where $g\in G_{\cal A}$.

\medskip

As in classical differential geometry, one obtains a metric $d$ for $\Sigma$ by considering
\begin{equation}\label{dis}
dist(a,b)= \inf\{Length(\gamma): \gamma \hbox{ is a smoooth curve joining } a  \hbox{ and } b\}, 
\end{equation}
where smooth means differentiable in the norm induced topology and the length of a curve $\gamma(t)$, $t\in [0,1]$ is measured using the inner product above (\ref{metrica}):
\[
Length(\gamma)=\int_0^1 <\dot{\gamma}(t),\dot{\gamma}(t)>_{\gamma(t)}^{\frac12}\; dt.
\]
The purpose of this paper is the geometric study of the resulting metric space, and particularly, of its convex subsets.

\medskip

If ${\cal A}$ is finite dimensional, i.e. a sum of matrix spaces, this metric is well known: it is the non positively curved Riemannian metric on the set of positive definite matrices, which is a universal model space for (finite dimensional) non positively curved manifolds on non compact type (see \cite{eberlein} and \cite{mostow}).

\medskip

If ${\cal A}$ is of type II$_1$, the trace inner product is not complete, so that $\Sigma$, with the inner products  $<\ ,\ >_a$, is not a Hilbert-Riemann manifold properly speaking. For instance, the exponential map
\[
exp:{\cal A}_h\to \Sigma , \ \ exp(x)=e^x,
\]
which is a global diffeomorphism in the norm topology, is continuous but non differentiable in the 2-norm $\| \ \|_2$ induced by $\tau$ (namely $\|x\|_2=\tau(x^*x)^{\frac12}$). The set $\Sigma$ itself is not a differentiable manifold with this norm. 

\medskip

However, the metric space $(\Sigma,dist)$  behaves in many senses like in the finite dimensional setting. Let us mention a few issues:
\begin{enumerate}
\item
The isometric action of the group $G_{\cal A}$ via $g\mapsto I_g$, where $I_g(x)=gxg^*$
\item
Minimality of geodesics (i.e. solutions of Euler's equation are minimizing for the distance introduced above in (\ref{dis}), see Theorem \ref{mini})
\item
Convexity of the map $t\mapsto dist(\gamma(t),\delta(t))$ which gives distance among geodesics (Corollary \ref{conv})
\item
Algebraic structure of (geodesically) convex subsets (Theorem \ref{alge}).
\item
Normal projections to convex submanifolds and their minimality (Lemma \ref{normi} and Theorem \ref{proje})
\item 
Existence and uniqueness of a factorization for invertible elements by means of convex submanifolds (Corollary \ref{invert})
\end{enumerate}

\section{Main inequalities}

The following inequality will be useful; its proof for $n\times n$ real matrices can be found in the inspiring paper of G.D. Mostow \cite{mostow}. It is called by R. Bhatia \cite{bhatia} the {\it exponential metric increasing property}. Bhatia proves it for matrices (and for more general norms). However his proof for the $2$-norm is valid almost verbatim in the infinite dimensional context for an arbitrary (finite, faithful) tracial state. We transcribe it. We use the fact that selfadjoint elements in a von Neumann algebra  can be approximated by selfadjoint elements with finite spectrum. 

\begin{lemm}
Let $\tau$ be a tracial faithful state in ${\cal A}$, and $x,y$ selfadjoint elements of ${\cal A}$. If $exp$ denotes the usual exponential map, $exp(x)=e^x$, then
\begin{equation}\label{emip}
\|y\|_2\le \|e^{-x}dexp_x(y)\|_2.
\end{equation}
\end{lemm}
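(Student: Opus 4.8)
The plan is to reduce to the case where $x$ has finite spectrum, diagonalize it, and then reduce the inequality to an elementary scalar estimate involving $\sinh$.

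First I would observe that both sides of (\ref{emip}) depend norm-continuously on $x$ for fixed $y$: the map $x\mapsto e^{-x}$ is norm-continuous, and so is $x\mapsto dexp_x(y)$ in view of the integral formula $dexp_x(y)=\int_0^1 e^{sx}ye^{(1-s)x}\,ds$; since $\|z\|_2\le\|z\|$ and $\tau$ is continuous, the 2-norms in question vary continuously. Hence, using that a selfadjoint element is a norm-limit of selfadjoint elements with finite spectrum, it suffices to prove (\ref{emip}) when $x=\sum_{i=1}^n\lambda_i p_i$ with $\{p_i\}$ pairwise orthogonal projections summing to $1$ and $\lambda_i$ distinct reals. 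Write $y_{ij}=p_i y p_j$, so that $y=\sum_{i,j}y_{ij}$ with $y_{ij}^*=y_{ji}$; since $\tau(y_{ij}^*y_{kl})=0$ unless $(i,j)=(k,l)$, one has the block-Pythagorean identity $\|y\|_2^2=\sum_{i,j}\|y_{ij}\|_2^2$, and likewise $\|y_{ji}\|_2=\|y_{ij}\|_2$.

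Next I would compute the differential in this decomposition. From the integral formula, $p_i\,dexp_x(y)\,p_j=\big(\int_0^1 e^{s\lambda_i+(1-s)\lambda_j}ds\big)\,y_{ij}=\frac{e^{\lambda_i}-e^{\lambda_j}}{\lambda_i-\lambda_j}\,y_{ij}$ (read as $e^{\lambda_i}y_{ii}$ on the diagonal). Multiplying on the left by $e^{-x}=\sum_i e^{-\lambda_i}p_i$ gives $e^{-x}dexp_x(y)=\sum_{i,j}c_{ij}\,y_{ij}$, where $c_{ij}=\dfrac{e^{\lambda_j-\lambda_i}-1}{\lambda_j-\lambda_i}$ (with $c_{ii}=1$), and each $c_{ij}>0$. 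Applying the block-Pythagorean identity once more, $\|e^{-x}dexp_x(y)\|_2^2=\sum_{i,j}c_{ij}^2\|y_{ij}\|_2^2$, so the lemma is reduced to showing $\sum_{i,j}c_{ij}^2\|y_{ij}\|_2^2\ge\sum_{i,j}\|y_{ij}\|_2^2$.

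To finish, I would pair up the $(i,j)$ and $(j,i)$ terms for $i\ne j$. Using $\|y_{ji}\|_2=\|y_{ij}\|_2$ and the AM--GM inequality, the combined contribution is $(c_{ij}^2+c_{ji}^2)\|y_{ij}\|_2^2\ge 2\,c_{ij}c_{ji}\,\|y_{ij}\|_2^2$. Setting $t=\lambda_i-\lambda_j$, a direct computation using $(e^{t}-1)(e^{-t}-1)=-4\sinh^2(t/2)$ yields $c_{ij}c_{ji}=\big(\tfrac{\sinh(t/2)}{t/2}\big)^2\ge 1$, since $\sinh u\ge u$ for $u\ge0$. Thus each paired term is $\ge 2\|y_{ij}\|_2^2$, and for the diagonal terms $c_{ii}^2=1$; summing gives exactly $\sum_{i,j}\|y_{ij}\|_2^2=\|y\|_2^2$, which proves (\ref{emip}). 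The only delicate point is the initial reduction to elements with finite spectrum, which rests on the continuity statements above; once that is in place the argument is an exact finite computation, with the equal-eigenvalue cases covered by the continuous extension of the divided-difference coefficients ($e^{\lambda_i}$, resp. $1$).
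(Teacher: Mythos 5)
Your proof is correct, and it takes a genuinely different route from the paper's. Both arguments reduce to $x$ of finite spectrum and decompose into blocks $p_i y p_j$, but the paper proceeds by first proving the symmetrized inequality $\|a^{\frac12}ba^{\frac12}\|_2\le \|\int_0^1 a^tba^{1-t}\,dt\|_2$ for \emph{positive} $b$ (via the geometric--logarithmic mean inequality $\sqrt{st}\le (s-t)/(\ln s-\ln t)$ applied blockwise), substituting $b=e^{-x/2}ye^{-x/2}$ to get $\|y\|_2\le\|e^{-x/2}dexp_x(y)e^{-x/2}\|_2$, and then desymmetrizing with a Cauchy--Schwarz step $\|a^{-\frac12}ba^{-\frac12}\|_2\le\|a^{-1}b\|_2$. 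You instead work directly with the one-sided quantity $e^{-x}dexp_x(y)$, whose block coefficients $c_{ij}=(e^{\lambda_j-\lambda_i}-1)/(\lambda_j-\lambda_i)$ are not individually $\ge 1$, and you compensate for the asymmetry by pairing $(i,j)$ with $(j,i)$ via AM--GM and the identity $c_{ij}c_{ji}=\bigl(\sinh(t/2)/(t/2)\bigr)^2\ge 1$; note that this last scalar inequality is exactly the paper's $\sqrt{st}\le(s-t)/(\ln s-\ln t)$ in exponential coordinates, so the two proofs rest on the same elementary fact. Your route is more direct for the stated inequality (no positivity hypothesis on an auxiliary element, no Cauchy--Schwarz desymmetrization); the paper's route has the advantage that the intermediate symmetric inequality $\|y\|_2\le\|T_x(y)\|_2$ with $T_x(y)=e^{-x/2}dexp_x(y)e^{-x/2}$ is precisely what is reused in Corollary 2.2 to show $T_x^{-1}$ is contractive. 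That said, your computation yields this too, since the block coefficients of $T_x(y)$ are $\sinh(u)/u\ge 1$ with $u=(\lambda_i-\lambda_j)/2$, each $\ge 1$ individually, so no pairing is even needed there. Two minor points: you quote the integral formula $dexp_x(y)=\int_0^1 e^{sx}ye^{(1-s)x}\,ds$ without proof (the paper derives it from the power series; it is standard, but worth a line), and your continuity reduction is sound since $\|\cdot\|_2\le\|\cdot\|$ and both $e^{-x}$ and the integrand depend norm-continuously on $x$ uniformly in $s$.
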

\begin{proof}
First we must establish the formula
\[
dexp_x(y)=\int_0^1 e^{tx}y e^{(1-t)x} dt.
\]
Note that $dexp_x(y)=\frac{d}{dt} e^{x+ty}|_{t=0}$. Then
\[
dexp_x(y)= y+\frac12(yx+xy)+\frac16(yx^2+xyx+x^2y)+\dots
\]
On the other hand,
\[
e^{tx}y e^{(1-t)x}=y+txy+(1-t)yx+\frac12(1-t)^2yx^2+t(1-t)xyx+\frac12 t^2x^2y+\dots
\]
Integrating this  series (which is absolutely convergent) term by term proves the equality. Denote $a=e^x$.
Let us show now that if $b$ is positive in ${\cal A}$,
\begin{equation}\label{desigualdad}
\|a^{\frac12}ba^{\frac12}\|_2\le \|\int_0^1 a^t b a^{1-t}dt\|_2.
\end{equation}
Assume first that $a$ has finite spectrum: $a=\sum_{i=1}^n \alpha_i p_i$, with $\alpha_i>0$ and $\sum_{i=1}^np_i=1$. Then $a^{\frac12}ba^{\frac12}=\sum_{i,j=1}^n \alpha_i^{\frac12}\alpha_j^{\frac12}p_ibp_j$. Therefore
\[
\|a^{\frac12}ba^{\frac12}\|_2^2=\sum_{i,j=1}^n \alpha_i\alpha_j\tau(p_i b p_j b p_i).
\]
Analogously $\int_0^1 a^tba^{1-t}dt=\sum_{i,j}^n\int_0^1 \alpha_i^t\alpha_j^{1-t}dt\ p_i b p_j$ and
\[
\|\int_0^1 a^tba^{1-t}dt\|_2^2=\sum_{i,j}^n\int_0^1 \alpha_i^{2t}\alpha_j^{2(1-t)}dt \  \tau(p_i b p_j b p_i)=\sum_{i,j}^n {\frac{\alpha_i^2-\alpha_j^2}{2\ln \alpha_i-2\ln \alpha_j}}\tau(p_ibp_jbp_i).\]
Note that $p_ibp_jbp_i$ is positive. Also one has the elementary inequality
\[
\sqrt{st}\le \frac{s-t}{\ln s-\ln t}
\]
for $s,t>0$. Then 
\[
\alpha_i\alpha_jp_ibp_jbp_i\le \frac{\alpha_i^2-\alpha_j^2}{2\ln \alpha_i-2\ln \alpha_j}p_ibp_jbp_i.
\]
Taking traces and adding yields (\ref{desigualdad}) in this case. In the general case, the inequality follows by approximating (in norm) the element $a$ with a positive elements with finite spectrum.

As in \cite{bhatia}, put $b=e^{-x/2}ye^{-x/2}$ in (\ref{desigualdad}):
\[
\|y\|_2\le \|\int_0^1 e^{tx}(e^{-x/2}ye^{-x/2})e^{(1-t)x} dt\|_2=\|e^{-x/2}\int_0^1 e^{tx} y e^{(1-t)x} dt \ e^{-x/2}\|_2
\]
\[
\hspace*{-5.4cm}=\|e^{-x/2}(dexp_x(y))e^{-x/2}\|_2.
\]
If $a$ is positive and invertible and $b$ is selfadjoint, by the Cauchy-Schwarz inequality for  $\tau$, one has
$$
\|a^{-\frac12}ba^{-\frac12}\|_2^2=\tau(a^{-1}ba^{-1}b)\le \tau(a^{-1}b^2a^{-1})^{\frac12}\tau(ba^{-2}b)^{\frac12}=\|a^{-1}b\|_2^2.
$$
Using this inequality for $a=e^x$ and $b=dexp_x(y)$ one obtains
$$ 
\|y\|_2\le \|e^{-x/2}(dexp_x(y))e^{-x/2}\|_2\le \|e^{-x}(dexp_x(y))\|_2. 
$$
\end{proof}

\smallskip

\begin{coro}\label{tx}
For any $x\in {\cal A}_h$, the map $\;T_x: y \mapsto e^{-x/2}dexp_x(y)e^{-x/2}$ is bounded, symmetric for the $2$-inner product (when restricted to ${\cal A}_h$) and invertible. The inverse is contractive i.e $\;T_x^{-1}(z)\|_2\le \|z\|_2.$\end{coro}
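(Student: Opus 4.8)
The plan is to read off all four assertions from the integral representation $dexp_x(y)=\int_0^1 e^{tx}y\,e^{(1-t)x}\,dt$ established in the Lemma, rewritten after the substitution $s=t-\frac12$ as
\[
T_x(y)=\int_{-1/2}^{1/2} e^{sx}\,y\,e^{-sx}\,ds .
\]
From this formula boundedness is immediate: since $\|a\,u\,b\|_2\le\|a\|\,\|b\|\,\|u\|_2$ for $a,b\in{\cal A}$, one gets $\|e^{sx}ye^{-sx}\|_2\le e^{2|s|\,\|x\|}\|y\|_2$, hence $\|T_x(y)\|_2\le e^{\|x\|}\|y\|_2$, so $T_x$ extends to a bounded operator of $L^2({\cal A},\tau)$. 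The symmetric interval of integration also gives $T_x({\cal A}_h)\subseteq{\cal A}_h$, because $T_x(y)^*=\int_{-1/2}^{1/2} e^{-sx}y\,e^{sx}\,ds=T_x(y)$ for $y=y^*$, by the change $s\mapsto -s$.

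Symmetry is a short trace computation. For $y,z\in{\cal A}_h$,
\[
\langle T_x(y),z\rangle_2=\int_{-1/2}^{1/2}\tau\!\left(e^{sx}y\,e^{-sx}z\right)ds=\int_{-1/2}^{1/2}\tau\!\left(y\,e^{-sx}z\,e^{sx}\right)ds ,
\]
using $\tau(ABC)=\tau(BCA)$; replacing $s$ by $-s$ converts the last integral into $\tau\!\left(y\,T_x(z)\right)=\langle y,T_x(z)\rangle_2$.

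Injectivity, together with the lower bound needed for the last claim, is already present in the proof of the Lemma: the intermediate estimate there is exactly $\|y\|_2\le\|e^{-x/2}dexp_x(y)e^{-x/2}\|_2=\|T_x(y)\|_2$, so $T_x$ is bounded below in the $2$-norm. The real work is surjectivity, and one cannot argue inside the incomplete space $({\cal A}_h,\|\cdot\|_2)$; instead I would invoke the fact recalled in the Introduction that $exp:{\cal A}_h\to\Sigma$ is a diffeomorphism for the \emph{norm} topology. Then $dexp_x$ is a bounded, boundedly invertible linear operator of $({\cal A}_h,\|\cdot\|)$, and composing it with the bounded, boundedly invertible conjugation $u\mapsto e^{-x/2}u\,e^{-x/2}$ exhibits $T_x$ as a linear bijection of ${\cal A}_h$. (Alternatively, $e^{-x}dexp_x=f(\mathrm{ad}\,x)$ with $f(\lambda)=(1-e^{-\lambda})/\lambda$, which does not vanish on the real spectrum of the selfadjoint operator $\mathrm{ad}\,x$.)

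Finally, contractivity of the inverse drops out by combining the last two facts: given $z\in{\cal A}_h$, set $y=T_x^{-1}(z)\in{\cal A}_h$; then $\|T_x^{-1}(z)\|_2=\|y\|_2\le\|T_x(y)\|_2=\|z\|_2$. The one delicate point in the whole argument is surjectivity of $T_x$ on ${\cal A}_h$: the $2$-norm lower bound alone does not suffice, precisely because $({\cal A}_h,\|\cdot\|_2)$ is not complete, so the argument must pass through the norm topology, where $exp$ is known to be a global diffeomorphism.
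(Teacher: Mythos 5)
Your proof is correct and follows essentially the same route as the paper: the symmetry is the same cyclic-trace computation (you run it on the integral formula $\int_{-1/2}^{1/2}e^{sx}\,y\,e^{-sx}\,ds$ where the paper manipulates the power series term by term, a cosmetic difference), and the contractivity of the inverse is read off from the same intermediate estimate $\|y\|_2\le\|e^{-x/2}dexp_x(y)e^{-x/2}\|_2$ in the Lemma. The one place you add substance is invertibility, which the paper dismisses as ``clearly'' true; your justification via the norm-topology diffeomorphism (equivalently, the nonvanishing of $(1-e^{-\lambda})/\lambda$ on the real spectrum of $\mathrm{ad}\,x$) is sound, and you are right to flag that surjectivity must be argued in the norm topology, since the $2$-norm lower bound alone cannot give it on an incomplete space.
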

\begin{proof}
The map is clearly bounded and invertible, the bound for the inverse follows from the proof of the previous Lemma. To prove that it is symmetric, note that
\[
<T_x(y),z>_2=\tau(zT_x(y))=\tau(  e^{-x/2}  \sum_{n\ge 0}\frac{1}{n!}\sum\limits_{p+q=n-1} x^pyx^q  e^{-x/2} z)=\qquad\qquad\quad
\]
\[
= \sum_{n\ge 0}\frac{1}{n!}\sum\limits_{p+q=n-1} \tau(  e^{-x/2}  x^p y x^q  e^{-x/2} z)=\sum_{n\ge 0}\frac{1}{n!}\sum\limits_{p+q=n-1} \tau(   x^p e^{-x/2} y e^{-x/2}   x^q   z)=
\]
\[ =\sum_{n\ge 0}\frac{1}{n!}\sum\limits_{p+q=n-1} \tau(  e^{-x/2}  x^q z x^p  e^{-x/2}   y)=  \tau(T_x(z) y)=<y,T_x(z)>_2.
\]
\end{proof}

\section{Geodesic distance}

For $X,Y$ smooth vector fields in $\Sigma$ and $p\in \Sigma$, we introduce the expression
\begin{equation}\label{covariant}
\left(\nabla_X Y\right)_p=\{X(Y)\}_p-\frac{1}{2}\left( X_p\; p^{-1}\; Y_p+Y_p\;p^{-1}\; X_p  \right)
\end{equation}
where $X(Y)$ denotes derivation of the vector field $Y$ in the direction of $X$ (performed in the linear space ${\cal A}_h$). Note that $\nabla$ is clearly symmetric and verifies all the formal identities of a connection. The compatibility condition between the connection and the metric
\[
\frac{d}{dt}<X,Y>_{\gamma}=<\nabla_{\dot\gamma} X,Y>_{\gamma}+<X,\nabla_{\dot\gamma} Y>_{\gamma}
\]
is fulfilled for any smooth curve $\gamma \subset\Sigma$ and $X,Y$ tangent vector fields along $\gamma$. This identity is straightforward from the definitions for both terms and the cyclicity of the trace. This says that $\nabla$ is the "Levi-Civita" connection of the metric introduced.

\smallskip

Euler's equation $\nabla_{\dot \gamma}\dot\gamma=0$ reads $\ddot\gamma\ = \dot\gamma\gamma^{^{-1}}\dot\gamma$, 
and it is easy to see that the (unique) solution of this equation with $\gamma(0)=p$, $\gamma(1)=q$ is given by the curve
\begin{equation}\label{curvas}
\delta_{pq}(t)=p^{\frac{1}{2}}\left( p^{-\frac{1}{2}}q p^{-\frac{1}{2}}\right)^t p^{\frac{1}{2}}.
\end{equation}
Note that $\delta_{pq}\subset\Sigma$ because $aba$ is positive invertible whenever $a,b$ are positive invertible.

\smallskip

We will prove that the shortest path joining $p$ to $q$ is given by the formula above  (Theorem \ref{mini}); these curves look formally equal to the geodesics between positive definite matrices (regarded as a symmetric space). 

\smallskip

We will use ${\rm Exp}_p$ to denote the exponential map of $\Sigma$. Note that 
\[
{\rm Exp}_p(v)=p^{\frac12}\;{\rm e}^{\;p^{-\frac12}\,v\,p^{-\frac12}}p^{\frac12}.
\]
Rearranging the exponential series we get a simpler expression
\[
{\rm Exp}_p(v)=p\;{\rm e}^{p^{-1}v}={\rm e}^{\,vp^{-1}}p.
\]
A straightforward computation also shows that for $p,q\in\Sigma$ we have 
\[
{\rm Exp}_p^{-1}(q)=p^{\frac12}\ln(p^{-\frac12}\,q\,p^{-\frac12})p^{\frac12}.
\]

As mentioned in the introduction, we measure curves in $\Sigma$ using the norms in the tangent space, namely
\[
Length(\alpha)=\int_0^1\|\dot\alpha(t)\|_{{\alpha}(t)}\, dt.
\]

We have $\|\dot{\delta_{p,q}}  (t)\|_{\delta_{p,q}(t)}\equiv \|\ln(p^{-\frac12}qp^{-\frac12})\|_2
$, so for the geodesics introduced in equation (\ref{curvas}), we have $L(\delta_{p,q})=\|\ln(p^{-\frac12}qp^{-\frac12})\|_2$.

\smallskip

\begin{theo}\label{mini}
Let $a,b\in \Sigma$. Then the geodesic $\delta_{a,b}$ is the shortest curve joining $a$ and $b$ in $\Sigma$, if the length of curves is measured with the metric defined above.
\end{theo}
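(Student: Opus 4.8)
The plan is to reduce the problem to the exponential metric increasing property (EMIP) of the Lemma by a change of variables that trivializes the geodesic. First I would use the invariance of the metric under the action $I_g(x)=gxg^*$ with $g=a^{-1/2}$. Since each $I_g$ is an isometry for the Riemannian metric (hence for the length functional and for $dist$), it suffices to prove minimality of the geodesic joining $I_{a^{-1/2}}(a)=1$ and $I_{a^{-1/2}}(b)=a^{-1/2}ba^{-1/2}=:c$. So I have reduced to showing that $\delta_{1,c}(t)=c^t=e^{t\ln c}$ is the shortest curve joining $1$ and $c$, and that $L(\delta_{1,c})=\|\ln c\|_2$.

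Next, let $\gamma:[0,1]\to\Sigma$ be an arbitrary smooth curve with $\gamma(0)=1$ and $\gamma(1)=c$. Write $\gamma(t)=e^{\beta(t)}$ where $\beta(t)=\ln\gamma(t)$ is a smooth curve in $\mathcal A_h$ with $\beta(0)=0$ and $\beta(1)=\ln c$ (using that $exp$ is a diffeomorphism in norm, so $\beta$ is smooth). Then $\dot\gamma(t)=dexp_{\beta(t)}(\dot\beta(t))$, and applying the EMIP (inequality \eqref{emip}) with $x=\beta(t)$ and $y=\dot\beta(t)$ gives
\[
\|\dot\beta(t)\|_2\le \|e^{-\beta(t)}dexp_{\beta(t)}(\dot\beta(t))\|_2=\|\gamma(t)^{-1}\dot\gamma(t)\|_2=\|\dot\gamma(t)\|_{\gamma(t)},
\]
where the last equality is just the definition of the Riemannian norm, since $\|\gamma^{-1}\dot\gamma\|_2^2=\tau(\gamma^{-1}\dot\gamma\gamma^{-1}\dot\gamma)=\|\dot\gamma\|_\gamma^2$ by cyclicity of the trace. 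Integrating,
\[
L(\gamma)=\int_0^1\|\dot\gamma(t)\|_{\gamma(t)}\,dt\ge \int_0^1\|\dot\beta(t)\|_2\,dt\ge \left\|\int_0^1\dot\beta(t)\,dt\right\|_2=\|\beta(1)-\beta(0)\|_2=\|\ln c\|_2,
\]
the middle inequality being the triangle inequality for the Banach-space-valued integral in $(\mathcal A_h,\|\cdot\|_2)$. Since $L(\delta_{1,c})=\|\ln c\|_2$ as computed in the excerpt, this shows $\delta_{1,c}$ is minimizing.

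Finally I would transport back: $\delta_{a,b}=I_{a^{1/2}}\circ\delta_{1,c}$, and since $I_{a^{1/2}}$ is an isometry mapping curves from $1$ to $c$ bijectively onto curves from $a$ to $b$ preserving length, $\delta_{a,b}$ is the shortest curve joining $a$ and $b$, with $dist(a,b)=\|\ln(a^{-1/2}ba^{-1/2})\|_2$. The main obstacle — really the only subtle point, and the reason the Lemma was proved first — is the EMIP inequality itself, which controls how the exponential map distorts the $2$-norm; everything else is bookkeeping with the isometric group action and the triangle inequality for vector-valued integrals. One should also note that the curves $\gamma$ in the definition of $dist$ are required smooth only in the norm topology, which is exactly the regularity under which $\beta=\ln\gamma$ is smooth and the termwise manipulations behind $dexp$ are valid, so no extra approximation argument is needed here.
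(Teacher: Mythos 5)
Your proposal is correct and follows essentially the same route as the paper's own proof: reduce to $a=1$ via the isometric action of $G_{\cal A}$, write the competing curve as $\gamma=e^{\beta}$, apply the exponential metric increasing inequality pointwise to get $\|\dot\gamma\|_{\gamma}\ge\|\dot\beta\|_2$, and finish with the triangle inequality for the vector-valued integral. The only difference is presentational (you spell out the transport back and the regularity remark more explicitly).
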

\begin{proof}
Let $\gamma$ be a smooth curve in $\Sigma$ with $\gamma(0)=a$ and $\gamma(1)=b$. We must compare the length of $\gamma$ with the length of $\delta_{a,b}$. Since the invertible group acts isometrically for the metric, it preserves the lengths of curves. Thus we way act with $a^{-\frac12}$, and suppose that both curves start at $1$, or equivalently, $a=1$. Therefore $\delta_{1,b}(t)=\delta(t)=e^{t x}$, with $x=\ln b$. The length of $\delta$ is therefore $\tau(x^2)^{\frac12}=\|x\|_2$. 
The proof follows easily from the inequality proved above. Indeed, since $\gamma$ is a smooth curve in $\Sigma$, it is of the form $\gamma(t)=e^{\alpha(t)}$, with $\alpha=\ln \gamma$. Then $\alpha$ is a smooth curve of selfadjoints with $\alpha(0)=0$ and $\alpha(1)=x$.
Moreover,
\[
\tau((\gamma^{-1}\dot{\gamma}\gamma^{-1}\dot{\gamma})^{\frac12}=\|e^{-\alpha} \dot{e^{\alpha}}\|_2=
\|e^{-\alpha} dexp_\alpha(\dot{\alpha})\|_2.
\]
By the inequality in the above lemma, this is not smaller than $\|\dot{\alpha}\|_2$.
Then
\[ \int_0^1 \tau((\gamma^{-1}\dot{\gamma}\gamma^{-1}\dot{\gamma})^{\frac12} dt \ge \int_0^1 \|\dot{\alpha}\|_2 dt \ge {\big\|}\int_0^1 \dot{\alpha} \  dt\ {\big\|}_2=\|x\|_2=\tau(x^2)^{\frac12}.
\]
\end{proof}

\begin{rema}
The geodesic distance induced by the metric is given by
\[
dist(a,b)=\tau\Bigl(\ln\bigl(a^{-\frac12}ba^{-\frac12}\bigr)^2\Bigr)^{\frac12}.
\]
\end{rema}

The curvature tensor \cite{cprpositive} is given by
\[
R_a(x,y)z=-\frac14 a\bigl[[a^{-1}x,a^{-1}y],a^{-1}\bigr]
\]
where $[\ ,\ ]$ is the usual commutator, {\it i.e} $[x,y]=xy-yx$. 

\medskip

Let $J(t)$ be a Jacobi field along a geodesic $\delta$ of $\Sigma$. That is, $J$ is a solution of the differential equation
\begin{equation}\label{ecuacionjacobi}
\frac{D^2J}{dt^2}+R_\delta(J,\dot{\delta})\dot{\delta}=0.
\end{equation}

Next we show that the norm of a Jacobi field is convex. If $x,y\in {\cal A}_h$ are regarded as tangent vectors of $\Sigma$ at the point $a$, then the following condition (which is a non positive sectional curvature condition) holds:
\[
<R_a(x,y)y,x>_a=\tau(R_a(x,y)ya^{-1}xa^{-1})\le 0.
\]
The proof of this fact is straightforward.
Then 
\[
\frac{d^2}{dt^2}<J,J>_{\gamma}=2\left\{<\frac{D^2 J}{dt^2},J>_{\gamma}+<\frac{D J}{dt},\frac{D J}{dt}>_{\gamma} \right\}=
\]
\[
\hspace*{3.7cm}=2\left\{-<R_\gamma(J,\dot{\gamma})\dot{\gamma},J>_{\gamma}+<\frac{D J}{dt},\frac{D J}{dt}>_{\gamma}\right\}\ge 0.
\]
In other words, the smooth function $t\mapsto <J,J>_{\gamma}$ is convex. We shall need convexity of the {\it norm} of the Jacobi field (and not of the {\it square} of the norm just noted).

\begin{prop}
Let $\gamma$ be a geodesic of $\Sigma$ and $J$ a Jacobi field along $\gamma$. The real map $t\mapsto <J,J>_{\gamma}^{\frac12}$ is convex.
\end{prop}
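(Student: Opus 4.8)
The plan is to deduce the convexity of $g(t):=\langle J,J\rangle_\gamma^{1/2}$ from a pointwise differential inequality for the scalar function $f(t):=\langle J,J\rangle_\gamma$, together with a regularization that avoids the points where $J$ vanishes (where $g$ need not be smooth).

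First I would compute, using compatibility of $\nabla$ with the metric and the Jacobi equation (\ref{ecuacionjacobi}),
\[
f'=2\left\langle \frac{DJ}{dt},J\right\rangle_\gamma,\qquad f''=-2\left\langle R_\gamma(J,\dot\gamma)\dot\gamma,J\right\rangle_\gamma+2\left\langle\frac{DJ}{dt},\frac{DJ}{dt}\right\rangle_\gamma .
\]
By the nonpositive sectional curvature inequality $\langle R_a(x,y)y,x\rangle_a\le 0$ noted above, $f''\ge 2\left\langle\frac{DJ}{dt},\frac{DJ}{dt}\right\rangle_\gamma\ge 0$. Combining this with the Cauchy--Schwarz inequality for the inner product $\langle\cdot,\cdot\rangle_{\gamma(t)}$ at each fixed $t$,
\[
(f')^2=4\left\langle \frac{DJ}{dt},J\right\rangle_\gamma^{2}\le 4\left\langle\frac{DJ}{dt},\frac{DJ}{dt}\right\rangle_\gamma\langle J,J\rangle_\gamma=4\left\langle\frac{DJ}{dt},\frac{DJ}{dt}\right\rangle_\gamma f\le 2ff'' ,
\]
so that $2ff''-(f')^2\ge 0$ on the whole parameter interval.

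Finally, for $\varepsilon>0$ I would set $h_\varepsilon:=(f+\varepsilon)^{1/2}$. A direct differentiation gives
\[
h_\varepsilon''=\frac{2(f+\varepsilon)f''-(f')^2}{4(f+\varepsilon)^{3/2}}=\frac{\bigl(2ff''-(f')^2\bigr)+2\varepsilon f''}{4(f+\varepsilon)^{3/2}}\ge 0 ,
\]
since $2ff''-(f')^2\ge 0$ and $f''\ge 0$; hence each $h_\varepsilon$ is convex, and letting $\varepsilon\to 0^{+}$ exhibits $g=\sqrt{f}$ as a pointwise limit of convex functions, so $g$ is convex. I expect the only real obstacle to be the non-smoothness of $g$ at the zeros of $J$, where the formula for $h_0''$ is meaningless; the $\varepsilon$-regularization is precisely what sidesteps this, so that no separate discussion of conjugate points (or of the sign of $f'$ across a zero) is needed. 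Everything else reduces to the displayed identity for $f''$ and the sign of the curvature term, both of which are elementary here.
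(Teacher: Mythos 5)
Your proof is correct. It reaches the same conclusion from the same two ingredients as the paper --- the nonpositivity of $<R_\gamma(J,\dot\gamma)\dot\gamma,J>_\gamma$ and a Cauchy--Schwarz inequality --- but it packages them differently, and in one respect more carefully. The paper first reduces, via the isometric action of $G_{\cal A}$, to a geodesic $\gamma(t)=e^{tx}$ through $1$, replaces $J$ by $K(t)=e^{-tx/2}J(t)e^{-tx/2}$, rewrites the Jacobi equation as $4\ddot K=Kx^2+x^2K-2xKx$, and then differentiates $\|K\|_2$ twice, checking nonnegativity of $\ddot f$ by two applications of Cauchy--Schwarz \emph{for the trace} (in particular $\tau(KxKx)\le\tau(K^2x^2)$, which is exactly the curvature sign in disguise). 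You instead stay invariant: you use the compatibility of $\nabla$ with the metric to get the displayed formulas for $f'$ and $f''$, the curvature sign to get $f''\ge 2\|DJ/dt\|_\gamma^2$, and Cauchy--Schwarz \emph{in the Riemannian inner product} to get $(f')^2\le 4\|DJ/dt\|_\gamma^2\, f\le 2ff''$; the identity $h_\varepsilon''=\bigl(2(f+\varepsilon)f''-(f')^2\bigr)/\bigl(4(f+\varepsilon)^{3/2}\bigr)$ then closes the argument. What your route buys is a clean treatment of the zeros of $J$: the paper simply asserts that it suffices to treat non-vanishing $J$, whereas your $\varepsilon$-regularization makes every $h_\varepsilon$ globally smooth and convex and passes to the pointwise limit, so no case analysis at zeros (or appeal to one-sided derivatives) is needed. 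What it does not buy is extra elementarity --- the curvature inequality you invoke, which the paper labels ``straightforward,'' is proved by precisely the trace Cauchy--Schwarz computation that the paper's version of the argument displays explicitly --- so you should make sure that inequality is established before you use it. One small point worth stating explicitly in a final write-up: a pointwise limit of convex functions is convex, which is what lets you conclude for $g=\sqrt f$ from the $h_\varepsilon$.
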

\begin{proof}
Clearly, is suffices to prove this assertion for a field $J$ which does not vanish. As in Theorem 1 of \cite{cprjacobi}, by the invariance of the connection and the metric under the action of $G_{\cal A}$, it suffices to consider the case of a geodesic $\gamma(t)=e^{tx}$  starting at $1\in \Sigma$ ($x\in {\cal A}_h$). For the field $K(t)=e^{-tx/2}J(t)e^{-tx/2}$ the Jacobi equation translates into
\begin{equation}\label{jacobimodificada}
4\ddot{K}=Kx^2+x^2K-2xKx.
\end{equation}
Moreover
\[
<J,J>_{\gamma}^{\frac12}=\tau(\gamma^{-1}J\gamma^{-1}J)^{\frac12}=\tau(K^2)^{\frac12}=\|K\|_2.
\]
Let us prove therefore that the map $t\mapsto f(t)=\|K(t)\|_2$ is convex, for any (non vanishing) solution $K$ of (\ref{jacobimodificada}). Note that $f(t)$ is smooth, and $\dot{f}=\tau(K^2)^{-\frac12} \tau(K\dot{K})$.
Then
\[
\ddot{f}=-\tau(K^2)^{-\frac32}\tau(K\dot{K})^2+\tau(K^2)^{-\frac12}\{\tau(\dot{K}^2)+\tau(K\ddot{K})\}.
\]
Let us multiply this expresion by $\tau(K^2)^{\frac32}$ to obtain
\[
-\tau(K\dot{K})^2+\tau(K^2)\tau(\dot{K}^2)+\tau(K^2)\tau(K\ddot{K}).
\]
The first two terms add up to a non negative number. Indeed, one has $\tau(K\dot{K})^2\le \tau(K^2)\tau(\dot{K}^2)$ by the Cauchy-Schwarz inequality for the trace $\tau$. Let us examine the third term $\tau(K^2)\tau(K\ddot{K})$. It suffices to show that $\tau(K\ddot{K})$ is non negative. Using (\ref{jacobimodificada}),
\[
\tau(K\ddot{K})=\frac14\{\tau(K^2x^2)+\tau(Kx^2K)-2\tau(KxKx)\}=\frac12\{\tau(K^2x^2)-\tau(KxKx)\}.
\]
This number is positive, again by the Cauchy-Schwarz inequality:
\[
\tau(KxKx)=\tau((xK)^*Kx)\le \tau((xK)^*xK)^{\frac12}\tau((Kx)^*Kx)^{\frac12}=\tau(K^2x^2). 
\]
\end{proof}

\smallskip

\begin{coro}\label{conv}
If $\gamma$ and $\delta$ are geodesics, the map $f(t)=dist(\gamma(t),\delta(t))$ is a convex function of $t$.
\end{coro}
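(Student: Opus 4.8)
The plan is to realize the interpolation between the two geodesics as a smooth variation \emph{through geodesics}, so that the transversal length becomes an $s$-average of norms of Jacobi fields, at which point the preceding Proposition does all the work. First I would reduce to a fixed interval: it suffices to prove $f(t)\le (1-t)f(0)+t\,f(1)$ for $t\in[0,1]$. Indeed, $\delta_{p,q}(t)=p^{1/2}(p^{-1/2}qp^{-1/2})^t p^{1/2}$ solves Euler's equation for all real $t$, and if $\eta$ is a geodesic then so is every affine reparametrization $t\mapsto\eta(at+b)$; hence restricting $\gamma,\delta$ to an arbitrary subinterval $[t_0,t_1]$ and rescaling to $[0,1]$ reduces the general convexity inequality to this one.

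Next, fixing $[0,1]$, I would put $p(s)=\delta_{\gamma(0),\delta(0)}(s)$, $q(s)=\delta_{\gamma(1),\delta(1)}(s)$, let $\gamma_s:=\delta_{p(s),q(s)}$ be the geodesic on $[0,1]$ from $p(s)$ to $q(s)$, and set $\Gamma(s,t)=\gamma_s(t)$. By uniqueness of the geodesic with prescribed endpoints, $\gamma_0=\gamma$ and $\gamma_1=\delta$ on $[0,1]$, while for each fixed $t$ the curve $s\mapsto\Gamma(s,t)$ joins $\gamma(t)$ to $\delta(t)$. From the explicit formula for $\delta_{p,q}$, together with smoothness of $z\mapsto z^r$ and $p\mapsto p^{\pm1/2}$ on the positive invertibles, $\Gamma$ is smooth in $(s,t)$. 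Since $t\mapsto\Gamma(s,t)$ is a geodesic for every $s$, the transversal field $J_s(t):=\frac{\partial\Gamma}{\partial s}(s,t)$ is a Jacobi field along $\gamma_s$; by the Proposition, the map $t\mapsto <J_s(t),J_s(t)>_{\gamma_s(t)}^{1/2}=\|J_s(t)\|_{\Gamma(s,t)}$ is therefore convex, for each fixed $s$.

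Finally, I would set $g(t)=\int_0^1\|J_s(t)\|_{\Gamma(s,t)}\,ds$, the length of the curve $s\mapsto\Gamma(s,t)$. As an integral over $s$ of functions convex in $t$, $g$ is convex on $[0,1]$. Since $s\mapsto\Gamma(s,t)$ is a curve from $\gamma(t)$ to $\delta(t)$, the definition of $dist$ as an infimum of lengths gives $f(t)\le g(t)$; and at the endpoints the curves $s\mapsto\Gamma(s,0)=\delta_{\gamma(0),\delta(0)}(s)$ and $s\mapsto\Gamma(s,1)=\delta_{\gamma(1),\delta(1)}(s)$ are minimizing by Theorem \ref{mini}, so $g(0)=f(0)$ and $g(1)=f(1)$. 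Combining these with the convexity of $g$,
\[
f(t)\le g(t)\le (1-t)\,g(0)+t\,g(1)=(1-t)\,f(0)+t\,f(1),\qquad t\in[0,1],
\]
which is the asserted convexity.

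I expect the only non-formal points to be the minor friction spots: (i) checking that $\Gamma$ is genuinely smooth in $(s,t)$, which rests on analyticity of the functional-calculus maps used above, and (ii) justifying that the transversal field of a variation through geodesics is a Jacobi field in this weak-Riemannian context — this goes through exactly as in \cite{cprjacobi}, using only the formal connection identities for $\nabla$ and the given expression for the curvature tensor $R$. Everything else is bookkeeping with the definition of $dist$.
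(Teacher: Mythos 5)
Your argument is correct and takes essentially the same route as the paper's proof: a geodesic square interpolating between $\gamma$ and $\delta$, the convexity of $t\mapsto\|J(t)\|_{\gamma_s(t)}$ from the preceding Proposition, and the representation of the transversal length as an $s$-integral of Jacobi-field norms. If anything, your write-up is the more careful one, since the paper tacitly treats the transversal curves as the minimizing geodesics between $\gamma(t)$ and $\delta(t)$, whereas your comparison $f(t)\le g(t)$ with equality at $t=0,1$ makes that step explicit and avoids the issue.
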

\begin{proof}
As in Theorem 2 of \cite{cprjacobi}, distance between $\gamma(t)$ and $\delta(t)$ is given by the geodesic $\alpha_t(s)$ obtained moving the $s$ variable in a geodesic square $h(s,t)$ with vertices $\gamma(t_0),\delta(t_0),\gamma(t_1),\delta(t_1)$. Taking the partial derivative along the $s$ direction gives a Jacobi field $J(s,t)$ along the geodesic $\beta_s(t)=h(s,t)$ and it also gives the speed of $\alpha_t$. Hence 
\[
f(t)=\int_0^1\|\frac{\partial\alpha_t}{\partial s}(s)\|_{\alpha_t(s)}ds=\int_0^1 \| J(s,t)\|_{h(s,t)}\;ds.
\]
This equation says that $f(t)$ can be written as the limit of a convex combination of convex functions $u_i(t)= \| J(s_i,t)\|_{h(s_i,t)}$, so $f$ must be convex itself.
\end{proof}

\smallskip

\begin{lemm} The following inequality holds for any $x,y\in{\cal A}_h$:
\begin{equation}\label{des}
dist(e^x,e^y)=\|\ln\left(e^{x/2}e^{-y}e^{x/2}\right)\|_{2}\ge \|x-y\|_{2}.
\end{equation}
\end{lemm}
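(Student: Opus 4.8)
The plan is to deduce this from the exponential metric increasing property (Lemma with inequality (\ref{emip})), exactly as one does for $n\times n$ matrices in Bhatia's and Mostow's treatment. The first equality in (\ref{des}) is just the formula for the geodesic distance recorded in the Remark, applied to $a=e^x$ and $b=e^y$: indeed $\mathrm{dist}(e^x,e^y)=\tau\bigl(\ln(e^{-x/2}e^ye^{-x/2})^2\bigr)^{1/2}=\|\ln(e^{x/2}e^{-y}e^{x/2})\|_2$, using that $e^{-x/2}e^ye^{-x/2}$ and $e^{x/2}e^{-y}e^{x/2}$ are inverse to each other (after conjugating by $e^{-x/2}$, which does not change the $2$-norm of the logarithm, since $\ln(g c g^{-1})=g\ln(c)g^{-1}$ and $\tau$ is a trace). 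So the content is the inequality $\mathrm{dist}(e^x,e^y)\ge\|x-y\|_2$.

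For the inequality I would argue as follows. Consider the geodesic $\delta=\delta_{e^x,e^y}$ joining $e^x$ to $e^y$; by Theorem \ref{mini} its length equals $\mathrm{dist}(e^x,e^y)$. Now push this curve back to ${\cal A}_h$ via the global diffeomorphism $\exp$: write $\delta(t)=e^{\alpha(t)}$ with $\alpha=\ln\circ\,\delta$ a smooth curve of selfadjoints, $\alpha(0)=x$, $\alpha(1)=y$. Exactly as in the proof of Theorem \ref{mini}, the speed of $\delta$ satisfies
\[
\|\dot\delta(t)\|_{\delta(t)}=\|e^{-\alpha(t)}\,d\exp_{\alpha(t)}(\dot\alpha(t))\|_2\ge\|\dot\alpha(t)\|_2
\]
by the exponential metric increasing inequality (\ref{emip}). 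Integrating,
\[
\mathrm{dist}(e^x,e^y)=\mathrm{Length}(\delta)=\int_0^1\|\dot\delta(t)\|_{\delta(t)}\,dt\ge\int_0^1\|\dot\alpha(t)\|_2\,dt\ge\Bigl\|\int_0^1\dot\alpha(t)\,dt\Bigr\|_2=\|\alpha(1)-\alpha(0)\|_2=\|x-y\|_2,
\]
where the last inequality is the convexity of the norm (Jensen / triangle inequality for the Bochner integral in the $2$-norm). This is the desired bound.

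The one genuinely delicate point is the very first step: verifying that the geodesic distance formula from the Remark, written with $a^{-1/2}b\,a^{-1/2}$, indeed rearranges into $\|\ln(e^{x/2}e^{-y}e^{x/2})\|_2$. This is a purely formal manipulation — conjugation-invariance of $\tau(\ln(\cdot)^2)$ together with the fact that for positive invertible $c$ one has $\ln(c^{-1})=-\ln(c)$ and $\tau(\ln(c)^2)=\tau(\ln(c^{-1})^2)$ — but it must be done carefully because $e^{x/2}e^{-y}e^{x/2}$ is positive (being $g g^*$ with $g=e^{x/2}e^{-y/2}$), hence has a well-defined logarithm. Everything else is a direct transcription of the proof of Theorem \ref{mini}, so I expect no further obstacle; the substantive inequality is entirely carried by Lemma (\ref{emip}).
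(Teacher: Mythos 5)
Your argument is correct, but it follows a genuinely different route from the paper's. The paper obtains (\ref{des}) as a consequence of the convexity result: it applies Corollary \ref{conv} to the two geodesics $\gamma(t)=e^{tx}$ and $\delta(t)=e^{ty}$ through $1$, notes that $f(t)=dist(\gamma(t),\delta(t))$ is convex with $f(0)=0$, so that $f(t)/t$ is nondecreasing and $f(1)\ge\lim_{t\to 0^{+}}f(t)/t$, and then identifies that limit with $\bigl\|\tfrac{d}{dt}\big|_{t=0}\ln\bigl(e^{tx/2}e^{-ty}e^{tx/2}\bigr)\bigr\|_2=\|x-y\|_2$ by a polynomial approximation of the logarithm near $1$. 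You instead pull the connecting curve back through $\exp$ and integrate the exponential-metric-increasing inequality (\ref{emip}) pointwise, which yields $Length(\gamma)\ge\|\ln\gamma(1)-\ln\gamma(0)\|_2$ for \emph{every} smooth curve and hence for the infimum; this is more elementary, since it bypasses the Jacobi-field machinery behind Corollary \ref{conv} and is really the proof of Theorem \ref{mini} run without the normalization $a=1$ (you do not even need the minimality of $\delta_{e^x,e^y}$, only the definition of $dist$ as an infimum). What the paper's route buys is economy within its own development, where Corollary \ref{conv} is needed anyway for the projection results. One small caveat in your write-up: $\|\dot\delta\|_{\delta}$ equals the symmetrized quantity $\|e^{-\alpha/2}\,d\exp_\alpha(\dot\alpha)\,e^{-\alpha/2}\|_2$, not $\|e^{-\alpha}\,d\exp_\alpha(\dot\alpha)\|_2$ (the two differ by the Cauchy--Schwarz step at the end of the proof of (\ref{emip})); the bound $\ge\|\dot\alpha\|_2$ that you need is precisely the intermediate inequality established in that proof, so nothing breaks --- and the paper's own proof of Theorem \ref{mini} contains the same imprecision.
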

\begin{proof}
Take $\gamma(t)=e^{tx}$, $\delta(t)=e^{ty}$ and $f$ as in the previous corollary. Note that $f(0)=0$, hence $f(t)/t\le f(1)$ for any $0<t\le 1$; hence $\lim\limits_{t \to 0^+}f(t)/t\le f(1)$. Now note that 
\[
f(t)/t=\frac{1}{t}\|\ln\left(e^{tx/2}e^{-ty}e^{tx/2}\right)\|_{2}=\tau\left( \left[\frac{1}{t}\ln\left(e^{tx/2}e^{-ty}e^{tx/2}\right) \right]^2\right)^{\frac12}.
\]
Since $\lim_{t\to 0^+}\frac{1}{t}\ln\left(e^{tx/2}e^{-ty}e^{tx/2}\right)=\frac{d}{dt}\mid_{t=0}\ln\left(e^{tx/2}e^{-ty}e^{tx/2}\right)$, and the logarithm of $\beta(t)=e^{tx/2}e^{-ty}e^{tx/2}$ can be approximated uniformly by polinomials $p_n(\beta)=\sum_k\alpha_{n,k} \beta^k$ for $t$ close enough to zero (note that $\beta(0)=1$), and $\frac{d}{dt}\beta\mid_{t=0}=x-y$, we have the desired inequality.
\end{proof}

\smallskip

Using the inner product in each tangent space, we can talk about angles between curves and more general subsets of $\Sigma$ in a natural way; in particular we have:

\begin{lemm}\label{tri}
The sum of the inner angles of any geodesic triangle in the manifold $\Sigma$ is less or equal than $\pi$
\end{lemm}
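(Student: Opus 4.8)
The plan is the standard argument for spaces of nonpositive curvature: deduce the statement from a \emph{law of cosines inequality} and then compare the triangle with a Euclidean one. Let the triangle have vertices $A,B,C\in\Sigma$, side lengths $a=dist(B,C)$, $b=dist(A,C)$, $c=dist(A,B)$, and inner angles $\alpha,\beta,\gamma$ at $A,B,C$. (If $A,B,C$ lie on a common geodesic --- which, by uniqueness of geodesics, is the only way a triangle inequality among $a,b,c$ can be an equality --- the angles are $0,0,\pi$ and there is nothing to prove.) I would establish
\[
a^2\ \ge\ b^2+c^2-2bc\cos\alpha
\]
and its two cyclic analogues. Granting this, let $\bar\alpha,\bar\beta,\bar\gamma\in[0,\pi]$ be the angles of the (non-degenerate) Euclidean triangle with sides $a,b,c$, so that $\bar\alpha+\bar\beta+\bar\gamma=\pi$ and $\cos\bar\alpha=(b^2+c^2-a^2)/(2bc)$; the displayed inequality reads $\cos\alpha\ge\cos\bar\alpha$, hence $\alpha\le\bar\alpha$ because the cosine is decreasing on $[0,\pi]$, and summing the three gives $\alpha+\beta+\gamma\le\pi$.

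For the law of cosines I would use the second variation of arc length, together with the two facts already at hand: the sectional curvature is nonpositive, $\langle R_p(x,y)y,x\rangle_p\le 0$, and the \emph{norm} (not merely the square of the norm) of a Jacobi field is a convex function of the geodesic parameter --- the Proposition above. Take the side $AB$, $\alpha(s)=\delta_{A,B}(s)$ for $s\in[0,1]$, and for each $s$ the geodesic $h(s,\cdot)=\delta_{C,\alpha(s)}(\cdot):[0,1]\to\Sigma$ from $C$ to $\alpha(s)$; the explicit formula (\ref{curvas}) makes $(s,u)\mapsto h(s,u)$ norm-smooth. Put $E(s)=dist(C,\alpha(s))^2$, the energy of $h(s,\cdot)$, so $E(0)=b^2$ and $E(1)=a^2$. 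The field $J_s(u)=\partial_s h(s,u)$ is a Jacobi field along $h(s,\cdot)$ with $J_s(0)=0$ and $J_s(1)=\dot\alpha(s)$, and the usual first and second variation identities --- obtained from (\ref{covariant}), metric compatibility and the symmetry of $\nabla$, exactly as in the verification that $\nabla$ is Levi-Civita --- give $E'(s)=2\langle\dot\alpha(s),\partial_u h(s,1)\rangle$ and $E''(s)=2\langle J_s(1),\frac{DJ_s}{du}(1)\rangle$. Since $\partial_u h(0,1)=-\dot\delta_{A,C}(0)$ and $\|\dot\delta_{A,B}(0)\|=c$, $\|\dot\delta_{A,C}(0)\|=b$, one reads off $E'(0)=-2bc\cos\alpha$.

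The crux is the estimate $E''(s)\ge 2c^2$, for which I need the following: if $J$ is a non-trivial Jacobi field on $[0,1]$ with $J(0)=0$ along a geodesic of $\Sigma$, then $\langle J(1),\frac{DJ}{du}(1)\rangle\ge\|J(1)\|^2$. This is precisely where convexity of $\|J\|$ rather than of $\|J\|^2$ is essential --- the latter only yields half of this, hence only $a\ge b-c\cos\alpha$. The proof is elementary: in nonpositive curvature $J$ has no interior zero (again by the convexity already established, a convex nonnegative function vanishing at $0$ and at an interior point would vanish identically), so $f(u)=\|J(u)\|$ is convex on $[0,1]$, smooth at $u=1$, with $f(0)=0$; convexity forces $f'(1)\ge f(1)-f(0)=\|J(1)\|$, while $f'(1)=\langle J(1),\frac{DJ}{du}(1)\rangle/\|J(1)\|$. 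Applying this to $J_s$, with $\|J_s(1)\|=\|\dot\alpha(s)\|=c$, gives $E''(s)\ge 2c^2$; integrating twice from $0$,
\[
a^2=E(1)\ \ge\ E(0)+E'(0)+c^2\ =\ b^2+c^2-2bc\cos\alpha,
\]
the law of cosines at $A$. The vertices $B$ and $C$ are treated symmetrically.

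The hard part is not any individual computation but the legitimacy of this variational calculus in the present weak-Riemannian setting, where $\Sigma$ is not a Hilbert manifold for $\|\cdot\|_2$: one must know that $(s,u)\mapsto h(s,u)$ is smooth, that $\partial_s h$ genuinely solves the Jacobi equation for $\nabla$, and that the first and second variation formulas hold. As with the compatibility of $\nabla$ with the metric already noted in the text, all of these reduce to differentiating the explicit expression (\ref{curvas}) and manipulating the definition (\ref{covariant}) together with the cyclicity of $\tau$, so the argument goes through without genuinely new analytic input.
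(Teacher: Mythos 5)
Your overall skeleton is the same as the paper's --- establish the law of cosines inequality $a^2\ge b^2+c^2-2bc\cos\alpha$ at each vertex, then compare with a Euclidean triangle with the same side lengths --- but you obtain the key inequality by a genuinely different and much heavier route. The paper simply squares the already-proved inequality (\ref{des}): after moving one vertex to $1$ by the isometric action $I_g$, the other two vertices are $e^x$ and $e^y$, the adjacent sides have lengths $\|x\|_2$ and $\|y\|_2$, the angle between them at $1$ has cosine $\tau(xy)/(\|x\|_2\|y\|_2)$, and $\|x-y\|_2^2=\|x\|_2^2+\|y\|_2^2-2\tau(xy)$ \emph{is} the Euclidean right-hand side; so (\ref{des}) squared is exactly the law of cosines at that vertex, with nothing left to prove. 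You instead rederive it by the classical second-variation argument: energy $E(s)$ of the geodesics from $C$ to the points of side $AB$, the identity $E''(s)=2\langle J_s(1),\frac{DJ_s}{du}(1)\rangle$, and the estimate $\langle J(1),\frac{DJ}{du}(1)\rangle\ge\|J(1)\|^2$ extracted from convexity of $\|J\|$ with $J(0)=0$. That argument is correct in substance (your worry about interior zeros is handled more cleanly by the unconditional convexity of $\|J\|^2$ plus ODE uniqueness, rather than by the conditional convexity of $\|J\|$), but it shoulders precisely the analytic burden you flag at the end --- smoothness of the two-parameter family, validity of the first and second variation formulas and of $\frac{D}{ds}\partial_u h=\frac{D}{du}\partial_s h$ in a setting where $\Sigma$ is not a Hilbert manifold for $\|\cdot\|_2$ --- none of which you actually discharge. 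The paper's route buys you the same conclusion with none of that overhead, because all the variational content was already packaged into Corollary \ref{conv} and the limit computation proving (\ref{des}); your route buys a self-contained comparison-geometry proof that would generalize to triangles not having a vertex conveniently expressible through the exponential at a common base point, but in this paper that generality is not needed and the cost is real.
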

\begin{proof}Squaring both sides of inequality (\ref{des}) leads (by the invariance of the metric for the action of $G_{\cal A}$) to
\[
l_i^2\ge l_{i+1}^2+l_{i-1}^2-2l_{i+1}l_{i-1}\cos(\alpha_i)
\]
where $l_i$ are the sides of any geodesic triangle and $\alpha_i$ is the angle opposite to $l_i$. 

These inequalities say that we can construct an Euclidian comparison triangle in the affine plane with sides $l_i$; they also say that the angle $\beta_i$ (opposite to $l_i$ for this flat triangle) is bigger than $\alpha_i$. Adding the three angles we have $\alpha_1+\alpha_2+\alpha_3\le \beta_1+\beta_2+\beta_3=\pi$. \end{proof}

\section{Convex sets}

We are interested in the convex subsets of $\Sigma$, that is, subsets $M\subset\Sigma$ such that the ambient geodesic joining two points in $M$ stays in $M$ for any value of $t$. Note that the simplest of such objects are the geodesics. 

It's not hard to see that when two elements $a,b\in \Sigma$ commute, the geodesic triangle spanned by $a,b$ and $1$ is convex, hence there is a flat surface containing $a,b$ and $1$; indeed, the triangle in $\Sigma$ is the image of the plane triangle with vertices $0,e_1,e_2$ under the map $T:\mathbb R^2\to \Sigma$ given by
\[
T(x,y)=e^{x\ln(a)-y\ln(b)}
\]
In particular the geodesic joining $a$ and $b$ is the image of the segment \[\{(x,y): x,y\ge 0,\;x+y=1\}.\]
This is not true in the general case (though the length of the segment is a lower bound for the length of that geodesic, as Lemma \ref{des} shows).

\smallskip

\begin{defi} An \emph{exponential set} $M\subset \Sigma$ is the exponential of a (closed, selfadjoint) subspace through the origin. In other words, $M=e^H$ with $H$ a closed subspace of ${\cal A}_h$. 
\end{defi}

\begin{lemm}
If $M$ is a convex exponential set in $\Sigma$, the geodesic symmetry $\sigma_ p:q\mapsto pq^{-1}p$ maps $M$ into $M$ for any $p\in M$
\end{lemm}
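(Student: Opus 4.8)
The plan is to reduce to the case $p=1$ by the isometric group action, and then verify the symmetry condition directly on the exponential chart. First I would note that for $p\in M=e^H$ we can write $p=e^h$ with $h\in H$, and conjugate by $g=e^{-h/2}=p^{-1/2}$. Since $I_g$ is an isometry of $\Sigma$ carrying geodesics to geodesics, it carries the convex set $M$ to another convex set $M'=gMg^*$. The key observation is that $M'$ is again an exponential set with the same subspace: $I_{p^{-1/2}}(M)=p^{-1/2}e^He^{H}p^{-1/2}$; more precisely, since $\sigma_p(q)=pq^{-1}p$ intertwines with the group action via $I_g\circ\sigma_p = \sigma_{I_g(p)}\circ I_{(g^*)^{-1}}$ (a routine check using $I_g(pq^{-1}p)=gpq^{-1}pg^* = (gpg^*)(g^{*-1}q^{-1}g^{-1})(gpg^*)$), it suffices to treat $p=1$, where $\sigma_1(q)=q^{-1}$. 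So the claim becomes: if $M=e^H$ is convex and $q\in M$, then $q^{-1}\in M$, i.e. $H$ is closed under negation — which it is, being a subspace. That last sentence is in fact immediate, so the real content must be that the reduction step is legitimate, i.e. that $I_g(M)$ is still of the form $e^{H'}$ for a suitable subspace when $p$ is a general point of $M$.

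The main obstacle is precisely that $I_{p^{-1/2}}(M)$ need not be an exponential set through the origin in an obvious way when $p\ne 1$; the group action does not commute with the exponential map unless the relevant elements commute. To handle this I would instead argue intrinsically. Fix $p\in M$ and $q\in M$. Because $M$ is convex, the geodesic $\delta_{p,q}(t)=p^{1/2}(p^{-1/2}qp^{-1/2})^t p^{1/2}$ lies in $M$ for all $t\in[0,1]$, and since $M=e^H$ with $H$ a subspace (hence the one-parameter group $e^{sv}$, $v\in H$, stays in $M$ for all $s\in\mathbb R$, by definition of exponential set), the geodesic through $p$ and $q$ actually extends: the curve $t\mapsto \delta_{p,q}(t)$ defined by the same formula for all $t\in\mathbb R$ stays in $M$. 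Here one uses that convexity plus the exponential-set structure forces geodesics to be defined globally inside $M$ — any geodesic segment in $M$ can be prolonged by convexity applied to points further out on it, exactly as in the finite-dimensional symmetric space argument. Then $\sigma_p(q)=pq^{-1}p = \delta_{p,q}(-1)$, which lies in $M$.

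So the key steps, in order, are: (i) identify the geodesic symmetry $\sigma_p$ with the geodesic reflection, $\sigma_p(q)=\mathrm{Exp}_p(-\mathrm{Exp}_p^{-1}(q))=\delta_{p,q}(-1)$, using the explicit formulas for $\mathrm{Exp}_p$ and $\mathrm{Exp}_p^{-1}$ recorded earlier; (ii) show that for an exponential set $M=e^H$, convexity implies geodesic completeness inside $M$, namely that every geodesic joining two points of $M$ can be extended to all of $\mathbb R$ within $M$ — the cleanest route is to reduce to $p=1$ via $I_{p^{-1/2}}$ and observe that $I_{p^{-1/2}}(M)$ contains the geodesic through $1$ and $p^{-1/2}qp^{-1/2}$, then use that this geodesic, being $e^{t\ln(p^{-1/2}qp^{-1/2})}$, extends globally in any set of the form $e^{H'}$ once its restriction to $[0,1]$ lies there and $M$ is convex (convexity lets you bootstrap from $[0,1]$ to $[0,2]$, etc., and to negative $t$); (iii) evaluate the extended geodesic at $t=-1$ to conclude $\sigma_p(q)\in M$. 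The delicate point to get right is step (ii): one must be careful that ``convex'' only gives the geodesic on $[0,1]$ for free, and extending it requires choosing auxiliary points in $M$ — but since $M=e^H$ is stable under $q\mapsto q^s$ for $q$ on a geodesic through $1$ (as $H$ is a subspace), together with the group action this yields the global extension. I expect this bootstrapping-plus-invariance argument to be the heart of the proof, everything else being formula-chasing with the expressions for $\mathrm{Exp}$ and $\delta_{p,q}$ already in hand.
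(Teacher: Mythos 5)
Your step (i) — the identification $\sigma_p(q)=\mathrm{Exp}_p(-\mathrm{Exp}_p^{-1}(q))=\delta_{p,q}(-1)$ — is exactly the paper's argument: the proof in the text consists of observing that $\sigma_p$ sends the geodesic $\gamma(t)=p^{\frac12}e^{tp^{-\frac12}vp^{-\frac12}}p^{\frac12}$ to $\gamma(-t)$ and that it is an isometry. The divergence is in what you call the heart of the matter, step (ii). The paper defines a convex set as one for which the ambient geodesic joining two of its points stays in $M$ \emph{for any value of $t$}, i.e.\ the whole geodesic line, not just the segment $[0,1]$. Under that definition there is nothing to extend: $\delta_{p,q}(-1)\in M$ by hypothesis, and the lemma is immediate. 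You instead read ``convex'' as segment-convexity and set out to prove the extension, which changes the statement into something genuinely harder.

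Taken as a self-contained argument under segment-convexity, your step (ii) has a real gap. Segment-convexity does not ``bootstrap'' from $[0,1]$ to $[0,2]$ or to negative $t$: to apply convexity between $p$ and $\delta_{p,q}(2)$ you must already know that $\delta_{p,q}(2)\in M$, which is precisely what you are trying to prove (a metric ball is segment-convex and admits no such extension, so some structural input beyond convexity is indispensable). The structural input you invoke — stability of $e^{H}$ under $a\mapsto a^s$ — only controls geodesics through $1$, and your proposed transfer to a general base point $p$ via $I_{p^{-1/2}}$ requires $I_{p^{-1/2}}(M)$ to be of the form $e^{H'}$ for a subspace $H'$, which is exactly the obstacle you correctly identified in your second paragraph and then relied on anyway. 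Note also that you cannot appeal to the property $aba\in M$ at this stage: in the paper that property is \emph{deduced} from this lemma (via $aba=\sigma_{a^{3/2}}\circ\sigma_{a^{1/2}}(b)$), so using it here would be circular. The fix is simply to use the paper's definition of convexity, after which your step (i) already finishes the proof.
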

\begin{proof}
The map $\sigma_p$ maps any geodesic through $p$, $\gamma(t)=p^{\frac12}e^{tp^{-\frac12}vp^{-\frac12}} p^{\frac12}$ onto $\gamma(-t)$; now it is clear that it is an isommetry of $\Sigma$ and it maps $M$ into $M$.
\end{proof}

Note that, if $M$ is convex and $a\in M$, then $a^{\alpha}=e^{\alpha\ln a}\mbox{ is in }M\mbox{ for any real }\alpha$. This observation together with the previous lemma leads to the following characterization of convexity:

\begin{prop}
If $M$ is a convex, exponential set in $\Sigma$, then 
\begin{equation}\label{aba}
aba\in M\mbox{ whenever }a,b\in M
\end{equation}
\end{prop}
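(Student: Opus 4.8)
The plan is to obtain (\ref{aba}) as a one-line consequence of the two facts already assembled above: the observation that $b\in M$ forces $b^{\alpha}\in M$ for every real $\alpha$, and the preceding lemma stating that the geodesic symmetry $\sigma_a(q)=aq^{-1}a$ maps $M$ into $M$ whenever $a\in M$. The bridge between these and the statement is the elementary identity $aba=\sigma_a(b^{-1})$.

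Concretely, I would proceed in three steps. First, since $b\in M$ and $M$ is a convex exponential set, apply the observation immediately preceding the proposition with exponent $\alpha=-1$ to conclude that $b^{-1}=e^{-\ln b}\in M$. Second, since $a\in M$, the preceding lemma gives $\sigma_a(M)\subseteq M$, so in particular $\sigma_a(b^{-1})\in M$. Third, evaluate directly: $\sigma_a(b^{-1})=a\,(b^{-1})^{-1}\,a=aba$. Hence $aba\in M$, which is (\ref{aba}).

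I do not expect any genuine obstacle here; all the work has already been done in the lemmas on $\sigma_p$ and on powers $a^{\alpha}$. The only point worth remarking is that both ingredients really do use the full hypothesis: inversion-stability of $M$ relies on $M=e^H$ with $H$ a linear subspace (so that $1=e^{0}\in M$ and the entire geodesic line through $1$ and $b$ lies in $M$, giving $b^{-1}=\delta_{1,b}(-1)\in M$), whereas stability under $\sigma_a$ relies on convexity of $M$. Thus the argument invokes \emph{convex} and \emph{exponential} exactly once each, and neither assumption can be dropped for this proof.
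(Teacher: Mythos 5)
Your argument is correct and is essentially the paper's proof: both reduce (\ref{aba}) to a one-line identity combining the geodesic-symmetry lemma with the observation that powers stay in $M$. The only (cosmetic) difference is the identity used --- you write $aba=\sigma_a(b^{-1})$ with $b^{-1}\in M$, while the paper writes $aba=\sigma_{a^{3/2}}\circ\sigma_{a^{1/2}}(b)$ using the powers $a^{1/2},a^{3/2}\in M$; your version is, if anything, slightly leaner.
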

\begin{proof}
Note that $\displaystyle 
aba= a^{\frac32}\left(  a^{\frac12}b^{-1}a^{\frac12} \right)^{-1}a^{\frac32}=\sigma_{a^{\frac32}}\circ \sigma_{a^{\frac12}}(b)$.
\end{proof}

The converse of this last statement is also true (this can be easily seen iterating the property above in order to construct $\gamma(t)$ for given $p,q\in M$). 

\smallskip

Let us see how this property looks in the tangent $H$ (recall that $M=e^H$). This result is related to the results of \cite{cocoexp} by H. Porta and L. Recht:

\smallskip

\begin{theo}\label{alge}
If $H$ is a closed subspace of ${\cal A}_h$ (in the norm topology of $\cal A$), then  $M=e^H$ is a geodesically convex subset of $\Sigma$ if and only if $[x,[x,y]]\in H$ for any $x,y\in H$.
\end{theo}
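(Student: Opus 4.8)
The plan is to reduce the statement to an identity for the triple product $e^xe^ye^x$ and then analyze that identity with the Baker-Campbell-Hausdorff (BCH) formula. By the Proposition above together with the converse remark following it, for an exponential set $M=e^H$ geodesic convexity is equivalent to the property that $aba\in M$ whenever $a,b\in M$. Writing $a=e^x$, $b=e^y$ with $x,y\in H$, and using that $aba=e^xe^ye^x$ is positive invertible, this is in turn equivalent to asking that $\log(e^xe^ye^x)\in H$ for all $x,y\in H$, where $\log$ is the inverse of the norm-homeomorphism $\exp:{\cal A}_h\to\Sigma$. So the theorem amounts to the equivalence: $\log(e^xe^ye^x)\in H$ for all $x,y\in H$ if and only if $[x,[x,y]]\in H$ for all $x,y\in H$.

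For the implication from convexity to the bracket condition I would fix $x,y\in H$ and consider $w(s,t)=\log(e^{sx}e^{ty}e^{sx})$, which is real-analytic in $(s,t)$ near the origin (the logarithm is analytic on the positive invertibles). Since $sx,ty\in H$, the hypothesis gives $w(s,t)\in H$, and because $H$ is a closed subspace every Taylor coefficient of $w$ at $(0,0)$ lies in $H$. A BCH expansion in two steps --- first $\log(e^{sx}e^{ty})$, then composition with $e^{sx}$ --- shows that the coefficient of $s^2t$ equals $-\frac{1}{6}[x,[x,y]]$; hence $[x,[x,y]]\in H$.

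For the converse I would first observe that the condition $[x,[x,y]]\in H$ for all $x,y\in H$ is equivalent to $H$ being a Lie triple system, that is, $[x,[y,z]]\in H$ for all $x,y,z\in H$: replacing $x$ by $x+z$ and using that $H$ is a subspace gives $[x,[z,y]]+[z,[x,y]]\in H$, and combining two instances of this relation and simplifying with the Jacobi identity isolates $3[y,[x,z]]\in H$. Granting this, fix $x,y\in H$; the claim is that $\log(e^xe^ye^x)\in H$. When $x,y$ have small norm this follows from the BCH series: the element $u=\log(e^xe^ye^x)$ satisfies $u(-x,-y)=-u(x,y)$ (since $e^{-u}=e^{-x}e^{-y}e^{-x}$), so the homogeneous components of its Taylor expansion are iterated Lie brackets of $x$ and $y$ of odd length, and an odd-length iterated bracket of selfadjoint elements of a Lie triple system $H$ lies again in $H$ --- a short induction on the length from the Lie triple identity and Jacobi (equivalently: the span of $H$ and of the brackets $[h,h']$, $h,h'\in H$, is a Lie subalgebra, invariant under the adjoint involution, whose selfadjoint part is exactly $H$); since $H$ is closed, the sum of the series lies in $H$. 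To remove the smallness restriction I would use analytic continuation: the map $t\mapsto\log(e^{tx}e^{ty}e^{tx})$ is a real-analytic ${\cal A}_h$-valued curve on all of $\mathbb R$ which takes values in the closed subspace $H$ for $t$ near $0$; composing with the quotient map ${\cal A}_h\to{\cal A}_h/H$ and applying the identity theorem for real-analytic functions, it takes values in $H$ for every $t$, in particular at $t=1$. Therefore $aba\in M$ for all $a,b\in M$, and $M$ is geodesically convex.

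The delicate point is the converse direction, and more precisely the step of upgrading $\log(e^xe^ye^x)\in H$ from $x,y$ of small norm to arbitrary $x,y\in H$: the BCH series converges only locally, so one needs both the structural fact that odd-length iterated brackets of a Lie triple system stay inside it --- which is exactly where the hypothesis is genuinely used, via its reformulation as the Lie triple identity --- and a global continuation argument. Everything else (the two reductions through the Proposition, and the BCH bookkeeping for the easy direction) is routine.
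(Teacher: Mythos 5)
Your proof is correct, but it follows a genuinely different route from the paper's. Both arguments start from the same reduction (convexity of $M=e^H$ is equivalent to $e^xe^ye^x\in M$ for $x,y\in H$), and both extract the double bracket in the ``only if'' direction from a second-order Taylor coefficient; your computation of the $s^2t$-coefficient $-\frac16[x,[x,y]]$ of $\log(e^{sx}e^{ty}e^{sx})$ checks out, and matches in spirit the paper's computation of $\lim_{t\to0}(\dot\alpha(t)-\dot\alpha(0))/t^2=\frac1{12}D_y^2(x)$. The ``if'' directions diverge substantially. The paper proves the operator identity $T_x=g(D_x/2)$ with $g(z)=\sinh(z)/z$ (using the symmetry of $T_x$ from Corollary \ref{tx} and the completion ${\cal H}=L^2({\cal A},\tau)$), derives the autonomous equation $\dot\alpha=D_\alpha\coth(D_{\alpha/2})(x)=\sum_nc_nD_\alpha^{2n}(x)$ for $\alpha(t)=\ln(e^{tx}e^ye^{tx})$, and concludes by uniqueness of the flow of this Lipschitz vector field on ${\cal H}_1=L^2(H,\tau)$ --- a single global argument, at the cost of working in the $2$-norm completion of $H$ rather than in the norm-closed $H$ itself. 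You instead stay entirely in the norm topology: polarization plus Jacobi upgrades the double-bracket condition to the Lie triple identity (your $3[y,[x,z]]\in H$ step is right), the $\mathbb Z_2$-graded subalgebra $H\oplus{\rm span}[H,H]$ (direct because $[H,H]$ is anti-selfadjoint) shows odd-length iterated brackets stay in $H$, the parity symmetry $u(-x,-y)=-u(x,y)$ kills the even BCH components, and closedness of $H$ handles the norm-convergent sum for small $x,y$; the analytic continuation along $t\mapsto\log(e^{tx}e^{ty}e^{tx})$, composed with the quotient map onto ${\cal A}_h/H$, correctly removes the smallness restriction. Your version buys freedom from the $L^2$-completion (and from the implicit step of returning from the $2$-norm closure of $H$ to $H$ itself), at the price of the local-to-global continuation and the Lie-triple bookkeeping; the paper's version buys a one-shot global argument tied to the operator $T_x$ it has already set up.
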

\begin{proof}We use  property (\ref{aba}) above to identify convex sets; the proof follows the guidelines of \cite{mostow} for matrices, and we translate it here. 

We first assume $H$ has the double bracket property. Set $D_x:{\cal A}\to{\cal A}$, $D_x=L_x-R_x$, the difference between left and right multiplication by $x$ in ${\cal A}$. 

Let's consider the completion of $\cal A$ with respect to the trace, namely ${\cal H}=L^2({\cal A},\tau)$. Clearly ${\cal H}_{\mathbb R}=L_{\mathbb R}^2({\cal A}_{h},\tau)$ contains as a proper, closed subspace the completion of $H$, namely ${\cal H}_1=L_{\mathbb R}^2(H,\tau)$. Since $\tau$ is a normal trace, the involution $^*$ extends to a bounded antilinear operator $J$ of ${\cal H}_{\mathbb R}$, and the map $D_x$ extends uniquely to a bounded linear operator of ${\cal H}_{\mathbb R}$ (which we will still call $D_x$).

First we establish the identity
\begin{equation}\label{sinh}
T_x(y)=g(D_x/2)(y)
\end{equation}
where $T_x$ is the extension of the map from Corollary \ref{tx}, and 
\[g (z):=\frac{\sinh(z)}{z}=\sum_{n\ge 0} \frac{z^{2n}}{(2n+1)!}\]
is an entire function. Note that $g(z)=(2z)^{-1}(e^z-e^{-z})$. To prove (\ref{sinh}), we take derivative with respect to $t$ in the identity $X(t)e^{X(t)}=e^{X(t)}X(t)$, where $X(t)=x+ty$; after rearranging the terms we come up with
\[
\frac12\left(e^{D_x/2}-e^{-D_x/2}\right)y=(D_x\circ T_x)(y).
\]
Note that if $D_x$ were invertible, we would be set; this is not necessarily the case. However, $D_x^2=D_x\circ D_x$ is selfadjoint when restricted to ${\cal A}$, and since $T_x$ (more precisely, its extension) is also sefaldjoint (Corollary \ref{tx}), the operator $T=T_x-g(D_x/2)$ is selfadjoint on ${\cal A}$, and hence on ${\cal H}$ (note that $g(z)$ involves only even powers of $z$). The equation above says that we have proved that $(D_x\circ T)(y)=0$ for any $y\in{\cal A}$; in other words $T$ maps ${\cal H}$ into $\{x\}'=\overline{\{b\in{\cal A}_h: bx=xb\}}$. A straightforward computation shows that $Tb=0$ for any $b\in \{x\}'$, which proves that $T=0$, i.e. equation (\ref{sinh}) holds.

Now, for $x,y\in H$ consider the curve $e^{\alpha(t)}=e^{tx}e^ye^{tx}$. Clearly $\alpha(0)=y\in H\subset {\cal H}_1$; we will prove that $\alpha$ obeys a differential equation in ${\cal H}_{\mathbb R}$ which has a flow that maps ${\cal H}_1$ into ${\cal H}_1$, and by the uniqueness of the solution of such equation we will have $e^xe^ye^x=e^{\alpha(1)}\in e^H=M$.

Differentiating at $t=t_0$ the equation yields to 
\[
xe^{\alpha(t_0)}+e^{\alpha(t_0)}x=dexp_{\alpha(t_0)}(\dot\alpha(t_0))=e^{\alpha(t_0)/2}T_{\alpha(t_0)}(\dot\alpha(t_0))e^{\alpha(t_0)/2}=
\]
\[
\quad=e^{\alpha(t_0)/2}  g(D_{\alpha(t_0)}/2)(\dot\alpha(t_0)) e^{\alpha(t_0)/2}.
\]
Note that $g(z)$ is invertible whenever $z$ is a bounded linear operator, and also that the power series for $zcoth(z/2)$ involves only even powers of $z$. On the other hand, $D_z/2=D_{z/2}$ and $e^zxe^{-z}=e^{D_z}x$, hence
\[
\dot\alpha=g^{-1}(D_{\alpha}/2)\circ (e^{-\alpha/2}xe^{\alpha/2}+e^{\alpha/2}xe^{-\alpha/2})=\qquad\qquad\qquad\qquad\qquad\qquad\quad
\]
\[
=g^{-1}(D_{\alpha/2})\circ (e^{D_{{\alpha}}/2}+e^{-D_{{\alpha}}/2}) (x)=D_{\alpha}\coth(D_{\alpha/2})(x)=\sum_{n} c_n D_{\alpha}^{2n} x=
\]
\[
=\sum_{n}c_n D_{\alpha}^2\circ \cdots \circ D_{\alpha}^2(x)=F(\alpha).\hspace*{6.1cm}
\]
Since $D_z^2(x)=[z,[z,x]]$, $F(z)=\sum_n c_n D_{z}^{2n}(x)$ can be regarded as a map from ${\cal H}_1$ to ${\cal H}_1$, and since it is clearly an analytic map of ${\cal H}$ into ${\cal H}$, it fulfills a Lipschitz condition. Now the unique solution must be $\alpha(t)=\ln(e^{tx}e^ye^{tx})$. This proves that $M$ is convex whenever $H$ has the double bracket property.

To prove the other implication, assume $M=e^H$ is convex and $H$ is closed in the norm topology of ${\cal A}_h$. Clearly the path $\alpha$ stays in $H$ for any value of $t$ (here $e^{\alpha(t)}=e^{tx}e^ye^{tx}$), and the same is true for $\dot\alpha$. Now since $\dot\alpha(t)=D_{\alpha(t)}\coth(D_{\alpha(t)}/2)x$, we have
\[
\lim_{t\to 0} \frac{\dot\alpha(t)-\dot\alpha(0)}{t^2}=\lim_{t\to 0} \frac{(1+\frac{1}{12}t^2D_{\alpha(t)}^2)x-x}{t^2}+tO(t)=\frac{1}{12}D_{y}^2(x)
\]
which proves that $D_y^2(x)=[y,[y,x]]$ belongs to $H$ whenever $x$ and $y$ are in $H$.
\end{proof}

\section{Projections}

>From now on assume $M=e^H$ is a convex exponential set in $\Sigma$. As before, we identify the derivatives of all the geodesics at $p\in M$ with the tangent space of $M$ at $p$, in order to define the angles between curves and sets in a natural way: note that in this way, $T_1M=H$ and $T_pM=p^{\frac12}Hp^{\frac12}$ (which can be thought of as the parallel transport along the geodesic joining $1$ and $p$ in $M$).

In particular, $T_1\Sigma$ is naturally identified with ${\cal A}_h$ and the same is true for $T_p\Sigma$, for any $p\in \Sigma$, since $p^{\frac12}{\cal A}_hp^{\frac12}={\cal A}_h$ (this is clear also from the fact that $\Sigma$ is open in ${\cal A}_h$).

\smallskip

\begin{lemm} Let $r\in\Sigma$. There is at most one point $p=\Pi_M(r)$ in $M$ such that the geodesic joining $r$ and $p$ is orthogonal to $M$ at $p$
\end{lemm}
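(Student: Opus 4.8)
The plan is to prove uniqueness by a convexity argument. Suppose $p_0, p_1 \in M$ both have the property that the geodesic from $r$ to $p_i$ meets $M$ orthogonally at $p_i$. Consider the geodesic $\beta(s)$ in $M$ joining $p_0 = \beta(0)$ to $p_1 = \beta(1)$ (it stays in $M$ by convexity), and the function $f(s) = dist(r, \beta(s))$. By Corollary \ref{conv}, $f$ is convex: indeed $dist(r,\cdot)$ applied along a geodesic is the distance between the constant geodesic $\equiv r$ and $\beta$, hence convex in $s$. The key point is then to show that $f$ has vanishing derivative at both endpoints $s=0$ and $s=1$; a convex function on $[0,1]$ with $f'(0)=f'(1)=0$ must be constant, and in fact (being a distance to a fixed point realized by a unique minimizing geodesic) this will force $p_0 = p_1$.

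**First I would** compute the one-sided derivative of $f$ at an endpoint via the first variation formula. Write $\delta_i$ for the minimizing geodesic from $r$ to $p_i$; its length is $f(i)$. The variation $\beta(s)$ of the endpoint $p_0$ induces, through the family of minimizing geodesics from $r$ to $\beta(s)$, a Jacobi field $J$ along $\delta_0$ with $J$ vanishing at $r$ and $J(p_0) = \dot\beta(0) \in T_{p_0}M$. Using compatibility of the connection with the metric (established in Section 3 for the connection $\nabla$) one gets the standard formula
\[
f'(0^+) = \frac{1}{f(0)}\,\langle \dot\beta(0),\, u \rangle_{p_0},
\]
where $u$ is the unit tangent vector of $\delta_0$ at $p_0$ pointing away from $r$. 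Since by hypothesis $\delta_0$ is orthogonal to $M$ at $p_0$ and $\dot\beta(0) \in T_{p_0}M$, this inner product vanishes, so $f'(0^+) = 0$. The same argument at $s=1$, running $\beta$ backwards, gives $f'(1^-) = 0$ (with the analogous sign, the relevant one-sided derivative of the convex function at the right endpoint).

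**The conclusion** then follows: a convex function $f:[0,1]\to\mathbb{R}$ with $f'(0^+)=0$ satisfies $f(s) \ge f(0)$ for all $s$, and with $f'(1^-)=0$ (i.e. the left derivative at $1$ is $\le 0$, forced to be $0$) satisfies $f(s)\ge f(1)$ as well; combined with convexity this makes $f$ constant on $[0,1]$. In particular $dist(r,p_1) = dist(r,p_0) = f(s)$ for all $s$, so every point of the geodesic $\beta$ lies at the same distance from $r$. But the minimizing geodesic from $r$ to a point is unique (by Theorem \ref{mini} together with the explicit formula \eqref{curvas}, any two points are joined by a unique geodesic which is the unique shortest path), and uniqueness of minimizers in this nonpositively curved setting rules out a nontrivial geodesic all of whose points are equidistant from a fixed point $r\notin\beta$; hence $\beta$ is constant and $p_0 = p_1$.

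**The main obstacle** I expect is making the first-variation computation rigorous in the weak (incomplete) Riemannian setting, where $\Sigma$ is not a genuine Hilbert–Riemann manifold: one must justify that the family of minimizing geodesics $\delta_s$ from $r$ to $\beta(s)$ depends smoothly on $s$ (here the explicit formula \eqref{curvas}, $\delta_{r,\beta(s)}(t) = r^{1/2}(r^{-1/2}\beta(s)r^{-1/2})^t r^{1/2}$, is a great help, since it is manifestly smooth in $\beta(s)$ and hence in $s$), that differentiation under the length integral is valid, and that the resulting variation field is indeed a Jacobi field so that $\|J\|$ is convex and the boundary-term formula for $f'$ holds. An alternative that sidesteps some of this: argue directly from the CAT(0)-type inequality of Lemma \ref{tri} — the geodesic triangle with vertices $r, p_0, p_1$ has the angle at $p_0$ between $\delta_0$ and $\beta$ equal to $\pi/2$ and likewise at $p_1$, so the comparison triangle in the Euclidean plane has two angles $\ge \pi/2$, which is impossible unless the side $p_0p_1$ is degenerate. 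This comparison-geometry route is probably the cleanest and avoids the delicate variational analysis entirely.
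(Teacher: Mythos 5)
Your closing ``alternative'' is in fact the paper's own proof: the paper takes the geodesic triangle with vertices $r$, $p$, $p'$, observes that the angles at $p$ and $p'$ are right angles, invokes Lemma \ref{tri} to force the angle at $r$ to be zero, and concludes from uniqueness of geodesics with prescribed initial point and velocity that the two geodesics coincide, hence $p=p'$. So the comparison-geometry route you relegate to a final remark is exactly what is done, it is complete given Lemma \ref{tri}, and you should promote it from an afterthought to the proof.

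The variational route you develop as your main argument has two genuine gaps. First, the first-variation formula $f'(0^+)=\langle\dot\beta(0),u\rangle_{p_0}/f(0)$ is asserted rather than proved; in this weak, incomplete Riemannian setting the ``standard formula'' is not available off the shelf, and one needs the kind of explicit computation the paper carries out later (in Lemma \ref{normi}, differentiating $\tau(\ln^2(\cdot))$ via the Cauchy integral formula). This is fixable but is real work. Second, and more seriously, your last step does not follow as written: from $f$ convex with vanishing one-sided derivatives at both endpoints you correctly get $f\equiv\mathrm{const}$, but the claim that ``uniqueness of minimizing geodesics rules out a nontrivial geodesic all of whose points are equidistant from $r$'' is not a consequence of anything established in the paper. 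Corollary \ref{conv} gives convexity of the distance, not strict convexity, and uniqueness of geodesics says nothing about equidistant sets. To close this you need a quantitative comparison statement, for instance the law-of-cosines inequality $l_i^2\ge l_{i+1}^2+l_{i-1}^2-2l_{i+1}l_{i-1}\cos(\alpha_i)$ proved inside Lemma \ref{tri}: applied twice to the triangle $r,p_0,p_1$ with right angles at $p_0$ and at $p_1$ it yields $dist(r,p_1)^2\ge dist(r,p_0)^2+dist(p_0,p_1)^2$ together with the symmetric inequality, whence $dist(p_0,p_1)=0$ --- at which point the entire variational apparatus is unnecessary.
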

\begin{proof} Assume there are two points $p$ and $p'$ in $M$ and two vectors $v$ and $v'$ orthogonal to $M$ at $p$ and $p'$ respectively such that $\gamma_1(1)=Exp_p(v)=\gamma_2(1)=Exp_{p'}(v'):=r$, and consider the geodesic triangle with sides the given geodesics and the unique geodesic in $M$ joining $p$ and $p'$. Since the angles at $p$ and $p'$ are right angles, and the summ of the inner angles of any such geodesic triangle is less or equal than $\pi$, it must be that the angle at $r$ is zero: since geodesics are unique (given an initial velocity and an initial position $r$), it must be that $\gamma_1=\gamma_2$, hence $p=p'$ and $v=v'$. 
\end{proof}

\smallskip

Set $NM$ as the normal bundle of $M$, i.e. $NM=\{(p,v):p\in M, v\in (T_pM)^{\perp}\}$.

Consider the map $E: NM\to \Sigma$ given by $(p,v)\mapsto Exp_p(v)$; since $E$ is analytic and with the right identifications has differential (at $(p,0)$) the identity map, $E(NM)$ contains an open neighbourhood of $M$ in $\Sigma$ (with the norm topology).

\begin{lemm}
The map $\Pi_M: E(NM)\to M$ that assigns the endpoint of the minimizing geodesic is contractive for the geodesic metric.
\end{lemm}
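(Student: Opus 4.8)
The plan is to show that for any two points $r_0, r_1 \in E(NM)$, one has $dist(\Pi_M(r_0), \Pi_M(r_1)) \le dist(r_0, r_1)$. Write $p_i = \Pi_M(r_i)$ and let $v_i \in (T_{p_i}M)^{\perp}$ be the normal vectors with $\mathrm{Exp}_{p_i}(v_i) = r_i$. First I would reduce to the infinitesimal statement by a standard variational argument: join $p_0$ to $p_1$ by the (unique) geodesic $\mu(s)$ lying inside $M$ (using that $M$ is convex), and build a one-parameter family of geodesics $\beta_s(t) = \mathrm{Exp}_{\mu(s)}\big(t\, V(s)\big)$ where $V(s)$ is the normal field along $\mu$ interpolating $v_0$ and $v_1$ obtained by the map $E^{-1}$; the curve $s \mapsto \beta_s(1)$ joins $r_0$ to $r_1$, and its length bounds $dist(r_0,r_1)$ from below only if we choose the family so that $\beta_s(1)$ traces (close to) the minimizing geodesic between $r_0$ and $r_1$ — alternatively, and more cleanly, I would instead fix the minimizing geodesic $\rho(s)$ from $r_0$ to $r_1$, set $p(s) = \Pi_M(\rho(s))$ (defined and smooth for $\rho(s)$ staying in $E(NM)$, shrinking if necessary and then patching), and estimate $\frac{d}{ds}\ell(s)$ where $\ell(s) = $ length of $s' \mapsto p(s')$.

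The heart of the matter is the pointwise comparison $\|\dot p(s)\|_{p(s)} \le \|\dot\rho(s)\|_{\rho(s)}$. To get this, consider the geodesic variation $h(s,t) = \mathrm{Exp}_{p(s)}\big(t\,v(s)\big)$, where $v(s)$ is the normal vector at $p(s)$ with $h(s,1) = \rho(s)$; for each fixed $s$ this is a geodesic in $t$ from $p(s)$ to $\rho(s)$ meeting $M$ orthogonally. Then $J(t) = \frac{\partial h}{\partial s}(s,t)$ is a Jacobi field along $t \mapsto h(s,t)$ with $J(0) = \dot p(s) \in T_{p(s)}M$ and $J(1) = \dot\rho(s)$. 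By the Proposition proved above (convexity of $t \mapsto \|J(t)\|_{h(s,t)}$), the function $t \mapsto \|J(t)\|_{h(s,t)}$ is convex on $[0,1]$. Now I need to rule out that it is decreasing at $t=0$: this is where orthogonality enters. The derivative of $\|J\|^2$ at $t=0$ is $2\langle \frac{DJ}{dt}(0), J(0)\rangle$, and $J(0) = \dot p(s)$ is tangent to $M$; one checks, using the first variation of arc length / the fact that $t \mapsto h(s,t)$ starts orthogonally to $M$ and the symmetry $\frac{DJ}{dt}(0) = \frac{D}{dt}\big|_{0}\frac{\partial h}{\partial s} = \frac{D}{ds}\big|... $ no — rather, $\frac{D}{dt}\frac{\partial h}{\partial s} = \frac{D}{ds}\frac{\partial h}{\partial t}$, so $\frac{DJ}{dt}(0) = \frac{D}{ds}v(s)$, and the inner product $\langle \frac{D}{ds}v(s), \dot p(s)\rangle$ at $t = 0$ is $\frac12 \frac{d}{ds}\langle v(s), \text{(tangent to }M)\rangle - \langle v(s), \frac{D}{ds}(\dot p(s))\rangle$; since $v(s) \perp T_{p(s)}M$ for all $s$ one concludes this derivative is $\le 0$ (the first term vanishes after unwinding, the second has a sign because... ) — so $t \mapsto \|J(t)\|$ is nondecreasing at $0$, hence by convexity $\|J(0)\| \le \|J(1)\|$, i.e. $\|\dot p(s)\|_{p(s)} \le \|\dot\rho(s)\|_{\rho(s)}$.

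Integrating this inequality over $s \in [0,1]$ gives $\ell(\{p(s)\}) \le \ell(\rho) = dist(r_0, r_1)$, and since $\{p(s)\}$ is a curve in $M$ from $p_0$ to $p_1$, $dist(p_0,p_1) \le \ell(\{p(s)\}) \le dist(r_0,r_1)$, which is the claim. Two technical points need care. First, $\Pi_M$ is only shown to be well-defined and (by the differential computation with $E$) locally a nice map; to run the integral I must ensure $\rho([0,1]) \subset E(NM)$ and that $s \mapsto p(s)$ is piecewise smooth — if $\rho$ leaves $E(NM)$ one subdivides $[0,1]$ and the global inequality still follows by summing, but for the cleanest statement one may simply assume $r_0, r_1$ close enough and then remark that contractivity is a local-to-global property along any curve. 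Second, the sign argument in the orthogonality step is the real obstacle: it amounts to a first-variation-of-energy / second-variation bookkeeping that must be done compatibly with the connection $\nabla$ of (\ref{covariant}); I expect to carry it out by transporting everything to $1 \in \Sigma$ via the $G_{\cal A}$-action (as in the proof of the Proposition), where $h(s,t)$, $v(s)$, $J$ all become explicit matrix-exponential expressions and the orthogonality $v(s) \perp p(s)^{1/2} H p(s)^{1/2}$ becomes a trace identity that directly forces $\langle \frac{DJ}{dt}(0), J(0)\rangle \le 0$. That computation, while elementary via cyclicity of $\tau$, is the one place where I anticipate having to be genuinely careful.
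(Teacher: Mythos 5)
Your overall strategy (project the minimizing geodesic $\rho$ from $r_0$ to $r_1$ down to $M$, prove the pointwise bound $\|\dot p(s)\|_{p(s)}\le\|\dot\rho(s)\|_{\rho(s)}$ via a Jacobi field along the normal geodesic, and integrate) is a legitimate alternative to the paper's argument, but as written it has a genuine gap exactly at the step you yourself flag as ``the real obstacle''. You need $\frac{d}{dt}\|J(t)\|^2\big|_{t=0}=2\langle \frac{DJ}{dt}(0),J(0)\rangle_{p(s)}\ge 0$ in order to combine with convexity of $t\mapsto\|J(t)\|$ and conclude $\|J(0)\|\le\|J(1)\|$; yet your computation arrives at ``this derivative is $\le 0$'' and then asserts ``so $t\mapsto\|J(t)\|$ is nondecreasing at $0$'', which is the opposite implication, and the decisive term $\langle v(s),\frac{D}{ds}\dot p(s)\rangle$ is left with the sentence ``the second has a sign because\dots'' unfinished. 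The correct completion is that this term is exactly zero: differentiating $\langle v(s),\dot p(s)\rangle_{p(s)}=0$ gives $\langle\frac{Dv}{ds},\dot p\rangle=-\langle v,\nabla_{\dot p}\dot p\rangle$, and the right-hand side vanishes because a geodesically convex $M$ is totally geodesic (its second fundamental form vanishes), so $\nabla_{\dot p}\dot p$ is tangent to $M$ while $v$ is normal. Without identifying and proving that vanishing, the proof does not close; and in this weak-Riemannian setting you would additionally have to justify that $s\mapsto\Pi_M(\rho(s))$ is a differentiable curve and that $h(s,t)$ is a smooth geodesic variation (the paper only shows $E$ has invertible differential along the zero section, not that $\Pi_M$ is smooth), which your ``subdivide and patch'' remark does not supply.

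For contrast, the paper sidesteps all of this: it fixes the two feet $p=\Pi_M(r)$, $q=\Pi_M(s)$, forms the single scalar function $f(t)=dist^2(\mathrm{Exp}_p(tv),\mathrm{Exp}_q(tw))$ along the two normal geodesics, computes $\dot f(0)=0$ by an explicit holomorphic functional calculus/trace computation using only the orthogonality relations at $p$ and $q$, and invokes convexity of the distance between geodesics (Corollary \ref{conv}) to get $f(0)\le f(1)$. That route needs no regularity of $\Pi_M$, no Jacobi field along the foot-point curve, and no second fundamental form; the only variational input is the first-order one at $t=0$. If you want to salvage your argument, the cleanest fix is to replace your pointwise Jacobi-field comparison by this two-point convexity argument, or else to prove explicitly (i) total geodesy of $M$ and (ii) smoothness of $\Pi_M\circ\rho$, neither of which is currently in your write-up.
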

\begin{proof}
If $r,s$ are two points in $E(NM)$, we will prove that this projection is contractive. Assume $\Pi_M(r)=p,\Pi_M(s)=q\in M$, $v\in T_p\Sigma$ is orthogonal to $M$ at $p$ and $w$ is orthogonal to $M$ at $q$; let's consider the distance function
\[
f(t)=dist^2(Exp_p(tv),Exp_q(tw))=dist^2(\gamma_1(t),\gamma_2(t))
\]
where $\gamma_1(t)$ is the only geodesic with initial velocity $v$ starting at $p$ and $\gamma_2$ is the only geodesic with initial speed $w$ starting at $q$. Namely,
\[
\gamma_1(t)=p^{\frac12} e^{t p^{-\frac12}vp^{-\frac12}}p^{\frac12}\qquad\mbox{ and }\qquad \gamma_2(t)=q^{\frac12} e^{t q^{-\frac12}wq^{-\frac12}}q^{\frac12}.
\]
Since $v\in \left(T_pM\right)^{\perp}$ and $w\in \left(T_qM\right)^{\perp}$, we have
\begin{equation}\label{ort}\begin{array}{l}
<v,p^{\frac12}xp^{\frac12}>_p=\tau\left( x p^{-\frac12}vp^{-\frac12}\right)=0\mbox{ for any }x\in H=T_1M \mbox{ and }\\
\\
<w,q^{\frac12}yq^{\frac12}>_q=\tau\left( y q^{-\frac12}wq^{-\frac12}\right)=0\mbox{ for any } y\in H=T_1M.
\end{array}
\end{equation}
Now we use the formula $dist(e^A,e^B)=\|\ln(e^{A/2}e^{-B}e^{A/2})\|_2$
for $A=\ln(\gamma_1(t))$ and $B=\ln(\gamma_2(t))$, to write
\[
f(t)=\|\ln(\gamma_1^{\frac12}\gamma_2^{-1}\gamma_1^{\frac12})\|^2_2=\tau\left(\ln^2( \gamma_1^{\frac12}\gamma_2^{-1}\gamma_1^{\frac12}  ) \right).
\]
Assume that $C$ is a simple, positively oriented curve in $\mathbb C$, around the spectrum of $\alpha_0=p^{\frac12}q^{-1}p^{\frac12}$. Then we can use the Cauchy formula to calculate $\ln^2(a)$ for any element $a\in \cal A$ such that $\sigma(a)\subset int(C)$, namely
\begin{equation}\label{cauchy}
\ln^2(a)=\frac{1}{2\pi i}\int_C \ln^2(z)(z-a)^{-1}dz.
\end{equation}
Naming $\alpha(t)=\gamma_1^{\frac12}(t)\gamma_2^{-1}(t)\gamma_1^{\frac12}(t)$, this formula holds true for $\alpha_0=\alpha(0)$ and for $\alpha(t)$ for $t$ sufficiently small. Note that
\[
f(t)=\tau\left(  \gamma^{-\frac12}(t)\gamma^{\frac12}(t) \ln^2(\alpha(t))   \right)=\tau\left( \gamma^{-\frac12}(t) \ln^2(\alpha(t) )\gamma^{\frac12}(t)   \right).
\]
If $x$ is invertible in $\cal A$, $xg(a)x^{-1}=g(xax^{-1})$ for any element $a\in \cal A$ and any analytic function $g$ in a neighbourhood of $\sigma(a)$. Then
\[
f(t)=\tau\left(  \ln^2\left[     \gamma_1(t)\gamma_2^{-1}(t)  \right]   \right)=\frac{1}{2\pi i}\int_C \ln^2(z)\, \tau\left[\left(z-\gamma_1(t)\gamma_2^{-1}(t) \right)^{-1}\right] \, dz.
\]
Now we compute $f'(0)$; note first that $\gamma_1(0)\gamma_2^{-1}(0)=pq^{-1}$ and also that
\[
\frac{d}{dt}_{t=0}\gamma_1(t)\gamma_2^{-1}(t)=-vq^{-1}+pq^{-1}wq^{-1}.
\]
Using the properties of the trace we get
\[
\frac{d}{dt}_{t=0}f(t)=-\frac{1}{2\pi i}\int_C \ln^2(z)\, \tau\left[       \left(z-pq^{-1}\right )^{-2} (-vq^{-1}+pq^{-1}wq^{-1})           \right] \, dz=
\]
\[
\qquad\qquad\quad=\tau\left[ \left(  -\frac{1}{2\pi i}\int_C \ln^2(z)\,     \left(z-pq^{-1}\right )^{-2} dz \right)  \left(-vq^{-1}+pq^{-1}wq^{-1}\right)           \right] .
\]
If we integrate by parts the first factor inside the trace, we obtain (note that $\frac{d}{dz}ln^2(z)=2\ln(z)z^{-1}=2z^{-1}\ln(z)$ and $C$ is a closed curve) that
\[\begin{array}{l}
\dot f(0)=\tau\left[ \left(    \frac{1}{2\pi i}\int_C 2\ln(z)z^{-1}\, \left(z-pq^{-1}\right )^{-1} dz  \right)  \left(-vq^{-1}+pq^{-1}wq^{-1}\right)       \right] = \\
\\
\qquad =-\tau\left[ \left(    \frac{1}{2\pi i}\int_C 2\ln(z)z^{-1}\, \left(z-pq^{-1}\right )^{-1} dz  \right)  vq^{-1}       \right] +\\
\\
\qquad\quad+ \tau\left[ \left(    \frac{1}{2\pi i}\int_C 2\ln(z)z^{-1}\, \left(z-pq^{-1}\right )^{-1} dz  \right)  pq^{-1}wq^{-1}    \right].
\end{array}
\]
Therefore,
\[\begin{array}{l}
\dot f(0)=-\frac{1}{2\pi i}\int\limits_C 2\ln(z)z^{-1}\, \tau\left[ q^{-1} \left(z-pq^{-1}\right )^{-1}  v  \right] dz +\\
\qquad\quad + \frac{1}{2\pi i}\int\limits_C 2\ln(z)z^{-1}\, \tau\left[ q^{-\frac12} \left(z-pq^{-1}\right )^{-1}  pq^{-1}wq^{-\frac12}  \right] dz.
\end{array}
\]
Using the elementary identities
\[ p^{\frac12}(z-p^{\frac12}q^{-1}p^{\frac12})^{-1}p^{-\frac12}=(z-pq^{-1})^{-1}=q^{\frac12}(z-q^{-\frac12}pq^{-\frac12})^{-1}q^{-\frac12}
\]
one arrives to the expression
\[\begin{array}{l}
\dot f(0)=-\frac{1}{2\pi i}\int_C 2\ln(z)z^{-1}\, \tau\left[ q^{-1} p^{\frac12}\left(z-p^{\frac12}q^{-1}p^{\frac12}\right )^{-1} p^{-\frac12} v  \right] dz +\\
\\
\quad\qquad+\frac{1}{2\pi i}\int_C 2\ln(z)z^{-1}\, \tau\left[ \left(z-q^{-\frac12}pq^{-\frac12}\right )^{-1} q^{-\frac12} pq^{-\frac12}q^{-\frac12}wq^{-\frac12}  \right] dz=\\
\\
\qquad= -2\tau\left[q^{-1}p^{\frac12}p^{-\frac12}qp^{-\frac12}\ln(p^{\frac12}q^{-1}p^{\frac12})p^{-\frac12}v\right]+\\
\\
\qquad\quad+2\tau\left[\ln(q^{-\frac12}pq^{-\frac12}) q^{\frac12}p^{-1}q^{\frac12}q^{-\frac12}pq^{-\frac12}q^{-\frac12}wq^{-\frac12}\right)=\\
\\
\qquad=-2\tau \left[ \ln(p^{\frac12}q^{-1}p^{\frac12}) p^{-\frac12}v p^{-\frac12}\right]+2\tau \left[\ln(q^{-\frac12}pq^{-\frac12}) q^{-\frac12}wq^{-\frac12}\right]=0+0=0,\\
\end{array}
\]
which holds by the orthogonality relations (\ref{ort}), naming $x=\ln(p^{\frac12}q^{-1}p^{\frac12})$ (recall that $M$ is convex), and $y=\ln(q^{-\frac12}pq^{-\frac12})$.

Since $f(t)$ is a convex function, $f$ has a global minimum at $t=0$, which proves that $dist(\Pi_M(r),\Pi_M(s))=dist(p,q)\le dist(r,s).$
\end{proof}

\begin{lemm}\label{normi}
Let $r\in\Sigma$ and let $M$ be a convex exponential set. The there exists $p\in M$ such that $dist(r,M)=dist(r,p)$  if and only if  there is a geodesic through $r$ orthogonal to $M$.
\end{lemm}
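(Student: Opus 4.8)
The statement is an equivalence, so I would prove the two implications separately, the first resting on the triangle comparison of Lemma~\ref{tri} and the second on a first‑variation computation that is essentially already carried out in the preceding lemma.

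For the implication ``orthogonal geodesic $\Rightarrow$ nearest point'', suppose the geodesic joining some $p\in M$ to $r$ meets $M$ orthogonally at $p$, i.e. its velocity $v={\rm Exp}_p^{-1}(r)$ at $p$ is orthogonal to $T_pM$. Given an arbitrary $q\in M$, I would look at the geodesic triangle with vertices $p,q,r$. By convexity of $M$ the side from $p$ to $q$ lies in $M$, hence its velocity at $p$ lies in $T_pM$, so the inner angle at $p$ (between the sides $pr$ and $pq$) is a right angle. Applying the Euclidean comparison inequality $l_i^2\ge l_{i+1}^2+l_{i-1}^2-2l_{i+1}l_{i-1}\cos(\alpha_i)$ from the proof of Lemma~\ref{tri} to the side opposite $p$, whose length is $dist(r,q)$, and using $\cos(\pi/2)=0$, I get
\[
dist(r,q)^2\ \ge\ dist(r,p)^2+dist(p,q)^2\ \ge\ dist(r,p)^2 .
\]
Thus $dist(r,p)\le dist(r,q)$ for every $q\in M$, so $dist(r,M)=dist(r,p)$.

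For the converse, suppose $p\in M$ realizes the distance, $dist(r,M)=dist(r,p)$; the candidate orthogonal geodesic is $\delta_{p,r}$, so I must show that $v={\rm Exp}_p^{-1}(r)=p^{1/2}\ln(p^{-1/2}rp^{-1/2})p^{1/2}$ is orthogonal to $T_pM$. Fix $w\in T_pM$ and set $\delta(t)={\rm Exp}_p(tw)=p^{1/2}e^{t\,p^{-1/2}wp^{-1/2}}p^{1/2}$. Since $p\in M$ forces $p^{1/2}\in M$ and $p^{-1/2}wp^{-1/2}\in H$ forces $e^{t\,p^{-1/2}wp^{-1/2}}\in M$, property~(\ref{aba}) gives $\delta(t)\in M$ for all $t$; hence the function $g(t)=dist^2(r,\delta(t))$, which is smooth near $t=0$ by the Cauchy integral representation~(\ref{cauchy}) exactly as in the previous lemma, satisfies $g(t)\ge dist(r,M)^2=g(0)$ and therefore $g'(0)=0$. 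Now $g'(0)$ is computed by the same first‑variation calculation made above for $\Pi_M$, specialized by taking the first curve constant (equal to $r$, i.e. the velocity there equal to $0$); this yields $g'(0)=2\,\tau\!\left(\ln(p^{-1/2}rp^{-1/2})\,p^{-1/2}wp^{-1/2}\right)$, and a one‑line trace manipulation identifies this with $2<{\rm Exp}_p^{-1}(r),w>_p$. Since it vanishes for every $w\in T_pM$, the vector ${\rm Exp}_p^{-1}(r)$ is orthogonal to $M$ at $p$, so $\delta_{p,r}$ is a geodesic through $r$ orthogonal to $M$.

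The step that needs the most care is the converse, but it presents no genuinely new difficulty: it is just the first‑variation identity, which is literally the computation of $\dot f(0)$ already performed for the contractivity of $\Pi_M$ with one of the two moving endpoints frozen. The two bookkeeping points to be careful about are that $\delta(t)$ remains in $M$ for all $t$ — so that $t=0$ is an \emph{interior} minimum of the smooth function $g$, which is where property~(\ref{aba}) enters — and the algebraic identity $\tau(\ln(p^{-1/2}rp^{-1/2})\,p^{-1/2}wp^{-1/2})=<{\rm Exp}_p^{-1}(r),w>_p$, which follows at once from the definition of the metric and cyclicity of $\tau$.
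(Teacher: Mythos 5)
Your proof is correct and follows essentially the same route as the paper: the forward implication via the angle--comparison inequality of Lemma~\ref{tri} applied to the triangle $p,q,r$ with a right angle at $p$, and the converse via the first--variation computation already carried out for $\Pi_M$, with one endpoint frozen at $r$. The only cosmetic difference is that you parametrize the variation in $M$ by tangent vectors $w\in T_pM$ (using property~(\ref{aba}) to keep ${\rm Exp}_p(tw)$ in $M$) rather than by endpoints $q\in M$ as the paper does, which is equivalent.
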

\begin{proof} Assume first that there is a point $p\in M$ such that the geodesic $\gamma$ through $p$ and $r$ is orthogonal to $M$ at $p$. Now for any point $q\in M$, take a geodesic $\beta$ joining $q$ and $r$, and a geodesic $\delta$ joining $p$ to $q$. Consider the geodesic triangle with sides $\gamma,\beta,\delta$; the angle opposite to $\beta$ is a right angle, so (see Lemma \ref{tri})
\[
Length(\beta)^2\ge Length(\delta)^2+Length(\gamma)^2\ge Length(\gamma)^2.
\]
This proves that $dist(p,r)\le dist(q,r)$ for any $q\in M$.

\smallskip

Assume now that $p\in M$ has the minimizing property and consider, for any point $q\in M$, the geodesic $\gamma_{p,q}(s)$ joining $p$ to $q$ (note that it is inside $M$ for any $s$ by virtue of the convexity). Now consider the family of geodesics
\[
\gamma_s(t)=\gamma_{r,\gamma_{pq}(s)}(t)=r^{\frac12}\left(   r^{-\frac12} p^{\frac12} \left( p^{-\frac12} qp^{-\frac12}  \right)^s p^{\frac12} r^{-\frac12}   \right)^t r^{\frac12}
\]
that is, the family of geodesics joining $r$ to $\gamma_{p,q}(s)$.

Put $g(s)=Length(\gamma_s)^2=dist(r,\gamma_{p,q}(s))$. This function has a minimum at $s=0$, hence (since it is $C^{\infty}$) it must be that $\dot g(0)=0$. As in the proof of the previous theorem, we have
\[
g(s)= \tau\left( \ln^2(r\gamma_{p,q}(s)^{-1}) \right)=\frac{1}{2\pi i}\int_C \ln^2(z)\, \tau\left[   \left(z-r\gamma_{p,q}^{-1}(s)\right )^{-1}  \right] \, dz.
\]
Taking the derivative at $s=0$ and integrating by parts we obtain
\[
0=\dot g(0)=-2\tau\left(\ln(rp^{-1})\ln(qp^{-1})p^{-1}\right)=2\tau\left(\ln(qp^{-1})p^{-1}\ln(p r^{-1})\right).
\]
On the other hand, the angle subtended by $\gamma_{p,q}$ and $\gamma_{r,p}$ at $p$ is
\[
<\dot\gamma_{r,p}(1),\dot\gamma_{p,q}(0)>_p=\tau\left(\ln(qp^{-1})p^{-1}\ln(pr^{-1})\right).
\]
This proves that $\gamma_{r,p}$ is orthogonal to any geodesic at $p$ contained in $M$, and by definition, it is orthogonal to $M$.
\end{proof}

\smallskip

The following is related to the main result in \cite{coco} by H. Porta and L. Recht:

\begin{theo}\label{proje}
If $M=e^H$ is a convex exponential set, and there is a closed, orthogonal supplement $S$ for $H$ (namely ${\cal A}_h=H\oplus_{\perp_{\tau}} S$) then for any point $r\in\Sigma$ there is a geodesic through $r$ orthogonal to $M$.
\end{theo}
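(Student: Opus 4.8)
The plan is to construct the point $p=\Pi_M(r)$ directly, by producing a fixed point of a suitable retraction, and then to verify that the connecting geodesic is normal. By the invariance of the metric under $G_{\cal A}$ we may act with a square root and assume $r=1$, so that we are looking for $p\in M=e^H$ with $\mathrm{Exp}_p^{-1}(1)=p^{\frac12}\ln(p^{-\frac12}\,1\,p^{-\frac12})p^{\frac12}=-p^{\frac12}\ln(p)p^{\frac12}$ orthogonal to $T_pM=p^{\frac12}Hp^{\frac12}$; by the computation of the inner product this is exactly the condition $\tau\bigl(\ln(p)\,h\bigr)=0$ for all $h\in H$, i.e. $\ln(p)\perp_\tau H$, i.e. $\ln(p)\in S$. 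So the theorem reduces to the purely algebraic claim: \emph{for every $x\in{\cal A}_h$ there exist $h\in H$ and $s\in S$ with $e^x=e^{h/2}e^{s}e^{h/2}$} — this is the promised Iwasawa-type factorization, and $p=e^{h/2}e^s e^{h/2}$ with $\ln p$ conjugate to $s\in S$ is the foot of the normal. (For the reduced problem $r=1$, $x=0$ and the factorization is trivial; the content is that after the $G_{\cal A}$-reduction one must solve it for the general $x$ coming from writing $r=e^{x/?}$... more honestly: without reducing, we seek $p\in M$, $v\in(T_pM)^\perp$ with $\mathrm{Exp}_p(v)=r$, equivalently $r=p^{\frac12}e^{p^{-\frac12}vp^{-\frac12}}p^{\frac12}$ with the orthogonality $\tau(p^{-\frac12}vp^{-\frac12}\,h)=0$ for $h\in H$.)

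The key steps, in order. First, reformulate existence of the normal geodesic as the existence of a solution to a fixed-point equation on $M$: define $\Phi:M\to M$ by sending $q\in M$ to the point $\Pi_M$-candidate obtained by projecting (linearly, in $\cal H$) the vector $\mathrm{Exp}_q^{-1}(r)\in T_q\Sigma\cong{\cal A}_h$ onto $T_qM$ along the transported supplement $q^{\frac12}Sq^{\frac12}$, exponentiating the $S$-part and sliding along $M$; a point $p$ is a foot of a normal geodesic iff the $H$-component vanishes, i.e. iff $p$ is fixed. Second, prove this map is a contraction (or at least has a fixed point) for the geodesic metric on $M$: here I would use Corollary \ref{conv}/Lemma \ref{tri}, the CAT(0) inequality $l_i^2\ge l_{i+1}^2+l_{i-1}^2-2l_{i+1}l_{i-1}\cos\alpha_i$, exactly as in the previous two lemmas, to show that moving the base point $q$ in $M$ decreases the squared distance to $r$ to first order unless the $H$-component is already zero — i.e. the strictly convex function $q\mapsto dist^2(r,q)$ on the (complete, CAT(0)) space $M$ attains its minimum, and at the minimizer the first variation $\dot g(0)=-2\tau(\ln(rp^{-1})\ln(qp^{-1})p^{-1})$ vanishes for all $q\in M$, which by the identification of $T_pM$ forces orthogonality, just as in Lemma \ref{normi}. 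Third, supply the one missing ingredient that Lemma \ref{normi} did not: that the infimum $dist(r,M)$ is actually \emph{attained}. This is where the hypothesis ${\cal A}_h=H\oplus_{\perp_\tau}S$ enters — it guarantees that the normal exponential map $E:NM\to\Sigma$ is, in the relevant directions, surjective onto a set containing $r$, or alternatively it lets one run the Iwasawa factorization argument of Theorem \ref{alge} (solving the ODE $\dot\alpha=F(\alpha)$ now in ${\cal H}={\cal H}_1\oplus{\cal H}_2$ and reading off the $S$-component) to write $r=p^{\frac12}e^{v}p^{\frac12}$ explicitly.

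The main obstacle I expect is Step 3: attainment of the minimum / surjectivity of the normal exponential map. The space $\Sigma$ is incomplete in the $2$-norm and $M$, though geodesically convex, is complete in its own geodesic metric but sits inside an incomplete ambient space, so Hopf--Rinow-type arguments are not directly available; one cannot simply say "a minimizing sequence has a limit." The natural fix is to turn the variational problem into the algebraic factorization $e^x=e^{h/2}e^{s}e^{h/2}$ and solve it by a continuation/ODE argument — differentiate $e^{\alpha(t)}=e^{t h/2}e^{s}e^{t h/2}$-type families and use the operator-valued function $g(z)=\sinh(z)/z$ and $z\coth z$ from the proof of Theorem \ref{alge} — together with the decomposition ${\cal A}_h=H\oplus S$ to show the map $(h,s)\mapsto e^{h/2}e^s e^{h/2}$ is onto $\Sigma$ (it is injective by the uniqueness in the previous lemma, and a local diffeomorphism near $H=0$ because its differential at $(0,s)$ is invertible by Corollary \ref{tx} combined with the direct sum, and then a global-onto argument via properness/convexity closes it). Once surjectivity is in hand, the matching $h,s$ give the base point $p=e^{h/2}e^s e^{h/2}$ and the normal velocity $v$, and orthogonality is the computation $\tau(\ln(p)\,h')=\tau(s\,h')=0$ for $h'\in H$ carried out as in Lemma \ref{normi}; convexity from Corollary \ref{conv} then upgrades "critical point" to "global minimum," completing the proof.
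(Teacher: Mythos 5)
You have correctly located where the content of the theorem lies: everything reduces to the surjectivity of the normal exponential map, $E(NM)=\Sigma$, equivalently to the factorization $e^z=e^ye^we^y$ with $y\in H$ and $w\perp_\tau H$. This is in fact exactly how the paper proceeds --- its proof consists of the single assertion that $E(NM)=\Sigma$, imported from Porta and Recht \cite{coco}. But your proposal does not prove this assertion; it only names it. Your first two steps (uniqueness of the foot via the angle-sum inequality, and vanishing of the first variation of $dist^2(r,\cdot)$ at a minimizer) are already the content of the lemmas preceding Theorem \ref{proje} and of Lemma \ref{normi}, so the entire new burden of the theorem sits in your Step 3, which you close with ``a global-onto argument via properness/convexity closes it.'' That is precisely the step that requires work. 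The metric on $\Sigma$ is weak and incomplete, and the $2$-norm topology is strictly coarser than the norm topology in which $\Sigma$ is a manifold, so neither a Hopf--Rinow argument (to get attainment of $dist(r,M)$ from a minimizing sequence) nor the standard ``injective local diffeomorphism which is proper is onto'' covering argument applies off the shelf; injectivity plus invertibility of the differential along the zero section only gives that $E(NM)$ contains a norm-neighbourhood of $M$, which the paper already notes before this theorem. One genuinely has to prove that the map $(y,w)\mapsto e^ye^we^y$ from $H\times S$ to $\Sigma$ is onto --- for instance by a continuation/ODE argument in the spirit of Theorem \ref{alge}, with an a priori estimate keeping the solution defined for all time --- and that argument is absent from your sketch.

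Two smaller points. The opening reduction to $r=1$ does not work as stated: conjugating by $r^{-\frac12}$ carries $M$ to $r^{-\frac12}Mr^{-\frac12}$, which is no longer an exponential set through $1$, so the orthogonality condition is not simply $\ln(p)\in S$ (you half-notice this and retract it, but the paragraph remains confused). Also, in the factorization $r=e^ye^we^y$ the foot of the normal geodesic is $p=e^{2y}\in M$, not $p=e^ye^we^y$, which equals $r$ itself. Neither slip is fatal; the surjectivity gap is the real issue.
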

\begin{proof}
Exactly as in \cite{coco}, there is an equality of sets $E(NM)=\Sigma$, where $NM$ stands for the normal bundle of $M$, i.e the pairs $(p,v)$ with $p\in M$ and $v\perp_p M$.
\end{proof}

The typical examples for this situation arise when $H=B_h$ for a subalgebra $B$ of ${\cal A}$. In this case, by a result of Takesaki \cite{take}, there is a conditional expectation ${\cal E}:{\cal A}\to{\cal A}$  with rank $B$, compatible with $\tau$ (i.e $\tau({\cal E}(x))=\tau(x)$ for any $x\in{\cal A}$).

\smallskip

\begin{coro}
If $H$ is a closed subspace in ${\cal A}_h$ (supplemented as in the previous theorem) such that $[x,[x,y]]\in H$ whenever $x,y\in H$, then for any $z\in{\cal A}_h$ we can factor
\[
e^z=e^ye^we^y
\]
for unique $y\in H$ and $w\in {\cal A}_h$ such that $\tau(wx)=0$ for any $x\in H$. Moreover, $e^{2y}$  minimizes (geodesic) distance between $M=e^H$ and $e^z$, and is unique with that property.
\end{coro}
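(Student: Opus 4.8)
The plan is to obtain the factorization directly from the normal projection $\Pi_M$, which is at our disposal precisely because the supplementation hypothesis is the one required by Theorem \ref{proje}. First observe that the double bracket condition makes $M=e^H$ geodesically convex (Theorem \ref{alge}), so Theorem \ref{proje} gives, for $r=e^z$, a geodesic through $e^z$ orthogonal to $M$; by Lemma \ref{normi} it meets $M$ at a point $p=\Pi_M(e^z)\in M$ with $dist(e^z,M)=dist(e^z,p)$, and this is the unique point in $M$ from which the geodesic to $e^z$ hits $M$ orthogonally. I would then write $p=e^{2y}$ with $y\in H$ (legitimate since $H$ is a subspace), and set $w=\ln(e^{-y}e^ze^{-y})\in{\cal A}_h$. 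Then $e^z=e^ye^we^y$, so it only remains to check that $w$ is $\tau$-orthogonal to $H$, and afterwards to settle uniqueness and minimality.

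For the orthogonality I would invoke the isometric action of $G_{\cal A}$. Since $-y\in H$ we have $e^{-y}\in e^H=M$, and by property (\ref{aba}) the isometry $I_{e^{-y}}(x)=e^{-y}xe^{-y}$ carries $M$ onto $M$ (apply (\ref{aba}) to $a=e^{-y}$ and $b=e^h\in M$, and note that $I_{e^{y}}$ is its inverse). It sends $p=e^{2y}\mapsto 1$ and $e^z\mapsto e^w$, hence it sends the geodesic from $p$ to $e^z$, which is orthogonal to $M$ at $p$, to the geodesic $t\mapsto e^{tw}$ from $1$ to $e^w$, which is then orthogonal to $M$ at $1$. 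Since $T_1M=H$, this is exactly $\tau(wx)=<w,x>_1=0$ for all $x\in H$.

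For uniqueness of the pair, suppose $e^z=e^{y_i}e^{w_i}e^{y_i}$ with $y_i\in H$ and $\tau(w_ix)=0$ for all $x\in H$, $i=1,2$. Each $e^{2y_i}$ lies in $M$, and conjugating by $e^{-y_i}$ exactly as above shows the geodesic from $e^{2y_i}$ to $e^z$ is orthogonal to $M$ at $e^{2y_i}$; by the uniqueness of such a point, $e^{2y_1}=e^{2y_2}$, whence $y_1=y_2$ by injectivity of $exp$ on ${\cal A}_h$, and then $e^{w_1}=e^{-y_1}e^ze^{-y_1}=e^{w_2}$ gives $w_1=w_2$. Finally, $e^{2y}=\Pi_M(e^z)$ realizes $dist(e^z,M)$ by Lemma \ref{normi}; any other minimizer $q\in M$ would, again by Lemma \ref{normi}, carry a geodesic to $e^z$ orthogonal to $M$ at $q$, forcing $q=\Pi_M(e^z)=e^{2y}$.

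The only genuinely load-bearing point is that $I_{e^{-y}}$ preserves $M$ and transports the orthogonal geodesic at $p$ into an orthogonal geodesic at $1$; everything else is bookkeeping with Theorem \ref{proje}, Lemma \ref{normi} and the uniqueness lemma for normal projections. I do not anticipate a serious obstacle here, since the corollary is essentially a repackaging of those results: the supplementation hypothesis supplies existence (via Theorem \ref{proje}), while the nonpositive-curvature uniqueness statements supply uniqueness of both the factors and the nearest point.
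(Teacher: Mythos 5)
Your proposal is correct and follows exactly the route the paper intends: the paper states this corollary without a written proof, as an immediate consequence of Theorem \ref{proje} (existence of the orthogonal geodesic under the supplementation hypothesis), Lemma \ref{normi} (equivalence of minimality and orthogonality), and the uniqueness lemma for the foot point, combined with the fact that $I_{e^{-y}}$ is an isometry carrying $M$ onto $M$ and $T_pM=e^yHe^y$ onto $T_1M=H$. Your write-up supplies precisely these omitted details, including the translation of orthogonality at $p=e^{2y}$ into the condition $\tau(wx)=0$ at the identity, so there is nothing to correct.
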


\begin{coro}
Fix $\cal D$ a m.a.s.a of ${\cal A}$. Then for any $x\in{\cal A}_h$ there are unique $d\in{\cal D}_h$ and $v\in {\cal A}_h$ such that $\tau(vz)=0$ for any $z\in \cal D$, and $e^x=de^vd$
\end{coro}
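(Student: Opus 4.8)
The plan is to obtain this as the special case $H = \mathcal{D}_h$ of the preceding corollary. First I would check that $\mathcal{D}_h$, the selfadjoint part of a maximal abelian subalgebra $\mathcal{D}$, satisfies the hypotheses of that corollary. Since $\mathcal{D}$ is a von Neumann subalgebra it is closed in the norm topology, hence $\mathcal{D}_h$ is a closed subspace of $\mathcal{A}_h$. The double bracket condition is trivial here: for $x,y\in\mathcal{D}_h$ we have $[x,y]=0$ because $\mathcal{D}$ is abelian, so $[x,[x,y]]=0\in\mathcal{D}_h$. Thus by Theorem \ref{alge}, $M=e^{\mathcal{D}_h}$ is a geodesically convex exponential set; in fact it is the positive cone $\Sigma_{\mathcal{D}}$ of the finite von Neumann algebra $\mathcal{D}$.

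Next I would verify the supplementation hypothesis of Theorem \ref{proje}. This is exactly the situation flagged in the remark just after Theorem \ref{proje}: taking $B=\mathcal{D}$, the result of Takesaki \cite{take} provides a $\tau$-preserving conditional expectation $\mathcal{E}:\mathcal{A}\to\mathcal{D}$. Its restriction to $\mathcal{A}_h$ is an idempotent, $\tau$-symmetric linear map onto $\mathcal{D}_h$, so $S=\ker(\mathcal{E})\cap\mathcal{A}_h=(I-\mathcal{E})(\mathcal{A}_h)$ is a closed subspace with $\mathcal{A}_h = \mathcal{D}_h \oplus_{\perp_\tau} S$: symmetry of $\mathcal{E}$ gives $\tau((I-\mathcal{E})(v)\,d)=\tau(v\,d)-\tau(\mathcal{E}(v)d)=0$ for $d\in\mathcal{D}_h$, which is precisely orthogonality for the $2$-inner product, and $S$ is closed because $\mathcal{E}$ is norm-continuous. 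Hence all hypotheses of the previous corollary are met.

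Applying that corollary to $z=x\in\mathcal{A}_h$ yields unique $y\in\mathcal{D}_h$ and $w\in\mathcal{A}_h$ with $\tau(wz)=0$ for all $z\in\mathcal{D}$ (equivalently for all $z\in\mathcal{D}_h$, since selfadjoint and skewadjoint parts are handled separately and $w$ is selfadjoint) such that $e^x=e^y e^w e^y$. Finally I would rename: set $d=e^y\in\mathcal{D}$ and $v=w$, so that $d$ is positive invertible in $\mathcal{D}$ and $e^x = d\,e^v\,d$, with the orthogonality condition $\tau(vz)=0$ for all $z\in\mathcal{D}$ inherited verbatim. Uniqueness of $d$ and $v$ is inherited from the uniqueness of $y$ and $w$ (the map $y\mapsto e^y$ being a bijection from $\mathcal{D}_h$ onto the positive invertibles of $\mathcal{D}$).

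I do not expect a genuine obstacle here, since the statement is essentially a dictionary translation of the preceding corollary once one records that a m.a.s.a.\ is a norm-closed abelian subalgebra and invokes Takesaki's theorem for the expectation. The only point requiring a line of care is the equivalence between the orthogonality condition stated against all of $\mathcal{D}$ and the condition against $\mathcal{D}_h$ used in the previous corollary; this is immediate by splitting $z\in\mathcal{D}$ into real and imaginary parts and using that $v$ is selfadjoint together with faithfulness of $\tau$.
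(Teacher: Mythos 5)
Your proposal is correct and matches the paper's (implicit) argument: the corollary is stated without proof precisely because it is the specialization $H=\mathcal{D}_h$ of the preceding corollary, with the double-bracket condition trivial by commutativity and the orthogonal supplement supplied by Takesaki's $\tau$-preserving conditional expectation --- exactly the situation the paper flags in the remark following Theorem \ref{proje}. Your reading of $d$ as the positive element $e^y$ is the intended one (uniqueness literally over all of $\mathcal{D}_h$ would fail, since $-d$ would also work), so nothing further is needed.
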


\begin{coro}\label{invert}
If $H$ is a closed, supplemented subspace in ${\cal A}_h$ such that $[x,[x,y]]\in H$ whenever $x,y\in H$, then for any $g\in G_{\cal A}$ we can factor
\[
g=e^xe^yu
\]
for unique $x\in H, y\in H^{\perp}$ and $u$ in the unitary group of ${\cal A}$.
\end{coro}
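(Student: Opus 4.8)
The plan is to reduce the factorization of a general invertible $g$ to the factorization of the positive invertible element $gg^*$ provided earlier in this section, exactly as the classical Iwasawa (or $QR$) decomposition of a matrix is extracted from the Gram factorization of $gg^*$. Concretely, if we expect $g=e^xe^yu$ with $u$ unitary, then $gg^*=e^xe^ye^ye^x=e^xe^{2y}e^x$, so the pair $(x,2y)$ must be precisely the data returned by the corollary that factors elements of $\Sigma$ as $e^z=e^ye^we^y$; the unitary $u$ is then forced.

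First I would note that since $g\in G_{\cal A}$, the element $gg^*$ is positive and invertible, hence $gg^*\in\Sigma$ and $z:=\ln(gg^*)\in{\cal A}_h$. Applying to $z$ the corollary that factors $e^z=e^xe^we^x$ (legitimate because $H$ is closed, supplemented, and has the double bracket property) yields unique $x\in H$ and $w\in{\cal A}_h$ with $\tau(wx')=0$ for all $x'\in H$, i.e. $w\in H^{\perp}$. Set $y:=\tfrac12 w$; since $H^{\perp}$ is a linear subspace, $y\in H^{\perp}$, and
\[
gg^*=e^{x}e^{w}e^{x}=e^{x}e^{2y}e^{x}=(e^{x}e^{y})(e^{x}e^{y})^{*}.
\]
Now put $h:=e^{x}e^{y}\in G_{\cal A}$ and $u:=h^{-1}g$. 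Then $uu^*=h^{-1}g\,g^*(h^{-1})^{*}=h^{-1}(hh^*)(h^*)^{-1}=1$, and likewise $u^*u=1$, so $u$ is unitary and $g=hu=e^{x}e^{y}u$, as desired.

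For uniqueness, suppose $g=e^{x}e^{y}u=e^{x'}e^{y'}u'$ with $x,x'\in H$, $y,y'\in H^{\perp}$ and $u,u'$ unitary. Multiplying each expression by its adjoint gives $gg^*=e^{x}e^{2y}e^{x}=e^{x'}e^{2y'}e^{x'}$, where $x,x'\in H$ and $2y,2y'\in H^{\perp}$; applying the uniqueness clause of the $\Sigma$-factorization corollary to $z=\ln(gg^*)$ forces $x=x'$ and $2y=2y'$, hence $y=y'$. Then $e^{x}e^{y}u=e^{x}e^{y}u'$ and, cancelling the invertible factor $e^{x}e^{y}$, we get $u=u'$.

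Every step here is an immediate consequence of the earlier factorization corollary together with the elementary observation that $aa^*=hh^*$ forces $h^{-1}a$ to be unitary, so I do not expect a genuine obstacle; the only point that needs a little care is to check that the middle term $w$ of the $\Sigma$-factorization lies in $H^{\perp}$, so that halving it keeps $y=\tfrac12 w$ in $H^{\perp}$ — which is exactly what that corollary asserts.
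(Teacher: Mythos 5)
Your proposal is correct and follows essentially the same route as the paper: factor $gg^*=e^xe^{2y}e^x$ via the preceding corollary, set $u=e^{-y}e^{-x}g$, verify unitarity, and deduce uniqueness of $u$ from that of $x,y$. The only difference is that you spell out the uniqueness argument and the computation $uu^*=h^{-1}(hh^*)(h^*)^{-1}=1$, which the paper leaves as ``a straightforward computation.''
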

\begin{proof}
Note that $gg^*\in \Sigma$ hence $gg^*=e^xe^{2y}e^x$ where $x,y$ are as required. Now take $u=e^{-y}e^{-x}g$; a straightforward computation shows that $uu^*=u^*u=1$. Uniqueness follows from the uniqueness of $x,y$.
\end{proof}

\smallskip

\bigskip

\noindent
Esteban Andruchow and Gabriel Larotonda\\
Instituto de Ciencias \\
Universidad Nacional de Gral. Sarmiento \\
J. M. Gutierrez 1150 \\
(1613) Los Polvorines \\
Argentina  \\
e-mail: eandruch@ungs.edu.ar, glaroton@ungs.edu.ar

\end{document}